\newcommand{\CC}{{\mathbb C}}
\newcommand{\ZZ}{{\mathbb Z}}
\def\gg{{\mathfrak{g}}}
\def\hh{{\mathfrak{h}}}
\def\CC{{\mathbb C}}
\def\ZZ{{\mathbb Z}}
\newcommand{\fma}{\overset{\circ}{\mathfrak{g}}}
\newcommand{\m}{{\mathfrak{m}}}
\newcommand{\n}{{\mathfrak{n}}}
\newcommand{\M}{{\mathcal{M}}}
\newcommand{\fmh}{\overset{\circ}{\mathfrak{h}}}
\newcommand{\fpr}{{\overset{\circ}{\Delta}}_{+}}
\newcommand{\C}{{\mathbb C}}
\newtheorem{dfn}{Definition}[section]
\newcommand{\bdfn}{\begin{dfn}\rm}
\newcommand{\edfn}{\end{dfn}}
\newtheorem{thm}[dfn]{Theorem}
\newcommand{\bthm}{\begin{thm}}
\newcommand{\ethm}{\end{thm}}                   
\newtheorem{lmma}[dfn]{Lemma}                   
\newcommand{\blmma}{\begin{lmma}}                   
\newcommand{\elmma}{\end{lmma}}                   
\newtheorem{ppsn}[dfn]{Proposition}
\newcommand{\bppsn}{\begin{ppsn}}
\newcommand{\eppsn}{\end{ppsn}}
\newtheorem{crlre}[dfn]{Corollary}
\newtheorem{rmk}[dfn]{Remark}
\newcommand{\brmk}{\begin{rmk}\rm} 
\newcommand{\ermk}{\end{rmk}}
\def\LL{{\mathfrak L}}
\numberwithin{equation}{section}
\title{Integrable modules for loop  affine-Virasoro algebra}
\author{ S Eswara Rao, Sachin S. Sharma, Sudipta Mukherjee }
\date{}
\begin{document}
\maketitle
\begin{abstract}
In this paper we classify the irreducible integrable modules for the loop affine-Virasoro algebra $((\fma \otimes \CC[t, t^{-1}] \oplus \CC K) \rtimes \mathrm{Vir}) \otimes A$, where
$A$ is a finitely generated commutative associative algebra with unity.\\\\
{\bf{MSC}:} 17B67,17B66 \\
{\bf{KEY WORDS}:} Affine Kac-Moody algebra, Virasoro algebra.

\end{abstract}

\section{Introduction}
Affine Kac-Moody Lie algebra  can be realised as a central extension of a loop algebra with a degree derivation or its fixed point subalgebra, while Virasoro Lie algebra is a  central extension of vector fields on circle. Both Lie algebras are fundamental in Mathematics and Physics.
Virasoro algebra $\mathrm{Vir}$ acts on derived algebra of affine Kac-Moody algebra $[\gg, \gg]$  by derivations and the Lie algebra $[\gg,\gg] \rtimes \mathrm{Vir}$ is also a very important algebra of study as it has applications to conformal field theory, number theory and soliton theory.  The Lie algebra 
$[\gg, \gg] \rtimes \mathrm{Vir}$ with common center has studied in Physics literature \cite{K2, Ku} and also studied in mathematical point of view in \cite{JHL, JH, LQ}. 

On the other hand loop algebras are also studied extensively for simple Lie algebra, affine Kac-Moody Lie algebra and Virasoro algebra.  For a finite dimensional simple Lie algebras $\fma$, a finitely generated commutative associative algebra $A$, the representations of loop algebras $\fma \otimes A$ were studied in \cite{CFK, NSS}. In \cite{R4}, study of irreducible modules for toroidal Lie algebra reduced to that for loop affine algebras. For $A= \CC[t,t^{-1}]$, complete classification of the Harish-Chandra modules for $\mathrm{Vir} \otimes A$ was obtained in \cite{ZRX}. Later this work was generalised for any finitely  generated commutative associative algebra by A. Savage in \cite{SAV}. For $\fma = \mathfrak{sl}_2$, classification of Harish-Chandra modules for $(\fma \otimes \CC[t, t^{-1}] \oplus \CC K) \rtimes \mathrm{Vir}$ with common center was done \cite{JHL}. For any simple Lie algebra $\fma$,  Rao \cite{R1} studied the Lie algebra $((\fma \otimes \CC[t,t^{-1}] \oplus \CC K) \rtimes \mathrm{Vir}) \otimes A$ with common center and classified all its irreducible integrable modules where affine center (hence also Vir center) acts nontrivially. 

In this paper  
we consider the same Lie algebra as in \cite{R1} but with independent centers and classify its irreducible integrable modules with finite dimensional weight spaces. We prove that the representation of $\LL(A) = ((\fma \otimes \CC [t, t^{-1}] \otimes \CC K) \rtimes \mathrm{Vir}) \otimes A$ are controlled by the affine center action.
More precisely, let $V$ be a irreducible integrable $\LL(A)$-module with finite dimensional weight spaces. In the case where affine center acts nontrivially on $V$ then irrespective how Vir center acts, $V$ must be a highest weight or a lowest weight module $V(\psi)$ with respect to a natural triangular decomposition of $\LL(A)$.
Conversely,  for a given integrable irreducible module with finite dimensional weight spaces $V(\psi)$, the condition on $\psi$ is stated. All this work follows from the work of \cite{R1} with minimal modifications. While the case where affine center acts trivially poses stiffer challenges. In this case we prove that 
Vir center is forced to act trivially and prove our main theorem of paper Theorem \ref{thm}.

In \cite{R1}, motivated from Sugawara operators, Rao defined the so called affine central operators on the category $\mathcal{O}$ of $\LL(A)$ modules. Unlike in \cite{R1}, as we have independent affine and Vir centers we generalise those operators in our setting.

The paper is organised as follows. In section \ref{sec2} we start with basic definitions and state the classification result when center acts non-trivially. Section \ref{sec3}
is devoted where affine center $K$ acts trivially. We define category of integrable irreducible modules $ \LL(S, \alpha, \beta, \psi)$ (see Section \ref{sec3} for details).  Main result of the paper is that this category exhausts  all the irreducible integrable representation with finite dimensional weight spaces
when $K$ acts trivially.  This is obtained by considering suitable triangular decomposition of $\LL(A)$ so that highest weight space $M$ with respect to given triangular decomposition is non-zero.  Then we study the highest weight space $M$ and show that weight space of $V$ are uniformly bounded not only as $\LL(A)$-module but also as Vir-module (Proposition \ref{ps1}). In the process we prove that $V$ has only finitely many $\gg(A)$-submodules. This result  plays a key role in the classification part . We use an interesting result due to Futorny \cite{F} to prove that $K \otimes A$ acts trivially on $V$. Next  we prove that there exists a cofinite ideal $R$ such that $(\LL \otimes R) V = 0$ (Proposition \ref{mp1}). Using results in \cite{ZRX} and \cite{SAV}, we prove that the cofinite ideal $R$ can be taken as a maximal ideal and using the result of \cite{R4}, obtain our main result Theorem \ref{thm}. Finally, Section \ref{Sec4} we generalise the affine central operators for $\LL(A)$. 

    We would like to emphasise that the techniques of this paper heavily rely on the integrability of $V$. So the more general problem of  classification of Harish-Chandra (irreducible modules with finite dimensional weight spaces) $\LL(A)$-modules probably requires a different approach.
However, we do hope that this work will bring attention of many researchers towards this more general problem.

\section{Affine Center acting non-trivially} \label{sec2}
\noindent
{\bf{Notations:}} We denote set of integers, rational numbers and complex numbers by $\ZZ, \mathbb{Q}, \CC$ respectively.  Set all nonnegative integers and positive integers are denoted by $\ZZ_{\geq 0}$ and $\ZZ_{>0}$. For any Lie algebra $\mathfrak{m}$ and finitely generated commutative associative algebra $A$ with unit, we always denote $\mathfrak{m} \otimes A$ by $\mathfrak{m}(A)$. Also for any element $a \in A$, and an ideal $I$ of $A$, by $\m(a)$ and $\m(I)$, we always mean $\m \otimes a$ and $\m \otimes I$.\\
\noindent
{\bf{Semidirect Product:}} 
Let $\mathfrak{a}$ and $\mathfrak{b}$ be Lie algebras. Let $\phi: \mathfrak{b} \mapsto \mathrm{Der}(\mathfrak{a})$ be a Lie algebra homomorphism from $\mathfrak{b}$ to a
Lie algebra of derivations of $\mathfrak{a}$. Then semidirect product of $\mathfrak{b}$ with $\mathfrak{a}$ is a Lie algebra denoted by $\mathfrak{a} \rtimes \mathfrak{b} : = \{a +b: a \in \mathfrak{a}, b \in \mathfrak{b}\}$ with the following
bracket operation : $[a_1 +b_1, a_2+ b_2] =  [a_1, a_2] + [b_1, b_2] + \phi(b_1)(a_2) - \phi(b_2)(a_1)$.
\subsection{}
Let $\fma$ be a finite dimensional simple Lie algebra.  Let $\fmh$ be a Cartan subalgebra of $\fma$. 
 Let ${\overset{\circ}{\mathfrak{n}}_{-}} \oplus \fmh \oplus {\overset{\circ}{\mathfrak{n}}_{+}}$ be a triangular decomposition of $\fma$. Let $\alpha_1, \ldots, \alpha_n$ denote
the set of simple roots and $\overset{\circ}{\Delta}, \overset{\circ}{\Delta}_{+}, \overset{\circ}{\Delta}_{-}$ be the set of roots, positive roots and negative roots of $\fma$ respectively. Root space decomposition of $\fma$ is given by
$\fma = \displaystyle{\fmh \oplus \bigoplus_{\alpha \in \overset{\circ}{\Delta}}{\fma_{\alpha}}}$. Let $\langle \cdot, \cdot \rangle$
be symmetric, non-degenerate bilinear form on $\fma$. Let $\gg = \fma \otimes \CC[t,t^{-1}] \oplus \CC K \oplus \C d_0$ be the corresponding affine Lie algebra with
$\hh = \fmh \oplus \CC K \oplus \CC d_0$ be its Cartan subalgebra. Set of roots, positive roots and negative roots of $\gg$ are denoted by  $\Delta, \Delta^{+}, \Delta^{-}$ respectively. Real roots and Imaginary roots of $\gg$ are denoted by $\Delta^{\mathrm{re}}$ and $\Delta^{\mathrm{im}}$ respectively. 
Let $\mathfrak{n}_{-} \oplus \hh \oplus \mathfrak{n}_{+}$ be the standard triangular decomposition of $\gg$. Simple roots and simple co-roots of $\gg$ are denoted by $\alpha_0, \alpha_1, \ldots, \alpha_n$ and $\alpha_0^{\vee}, \alpha_1^{\vee}, \ldots, \alpha_n^{\vee}$ respectively. Root lattices of $\fma$ and
$\gg$ are denoted by $\overset{\circ}{Q}$ and $Q$ respectively, where $\overset{\circ}{Q} = \sum_{i = 1}^{n}{\ZZ\, \alpha_i}$ and $Q = \sum_{i = 0}^{n}{\ZZ\, \alpha_i}$. Let $\overset{\circ}{Q}_{+} = \sum_{i = 1}^{n}{\ZZ_{\geq 0}\, \alpha_i}$ and $Q_{+} = \sum_{i = 0}^{n}{\ZZ_{\geq 0}\, \alpha_i}$.  Let 
$\gg = \displaystyle{ \hh \oplus \bigoplus_{\alpha \in \Delta}{\gg_{\alpha}}}$ be a root space decomposition.
Let $\gg' = [\gg, \gg] =  \fma \otimes \CC[t,t^{-1}] \oplus \CC K$ and $\hh' = \fmh \oplus \CC K$. We recall the bracket operations of $\gg$:
\begin{align*}
[x \otimes t^n, y \otimes t^{m}] & = [x, y] \otimes t^{m+n} + m \delta_{m, -n} \langle x , y \rangle K ;\\
[K, x \otimes t^n ]  = 0 & \,\,  [K, d_0] = 0 ; \\
[d_0, x \otimes t^n]  = n & x \otimes t^n , \,\,\,\, x, y \in \fma, n, m \in \ZZ.
\end{align*}
Let $\mathrm{Vir}$ be the Virasoro algebra with a basis $\{d_n, C : n \in \ZZ\}$ and bracket operations $[d_n, d_m] = (m -n) d_{n +m} + \delta_{m, -n} \frac{n^3 - n}{12} ; [C, d_n] = 0$. Consider the Lie algebra $\LL = \gg' \rtimes \mathrm{Vir}$ with bracket operations
$[d_n, x \otimes t^m] = m x \otimes t^{m+n} ; [C, x \otimes t^n] = 0, [C, K] = 0, x \in \fma, d_n \in \mathrm{Vir}, n, m \in \ZZ $. Let $A$ be a finitely generated commutative associative algebra with unity. This paper is concerned with the Lie algebra
$\LL(A): = (\gg' \rtimes \mathrm{Vir}) \otimes A$ called a loop affine-Virasoro algebra and its irreducible integrable representations. The bracket operations on $\LL(A)$ is given by $[X \otimes a, Y \otimes b] = [X, Y] \otimes ab,  \,\, X, Y \in \LL, a, b \in A$. Let $\bar{\hh} = \hh \oplus \CC C$ be a Cartan subalgebra of $\LL(A)$.  Now we write root space decomposition of $\LL(A)$.
Let $\delta, \Lambda_0, \omega \in \bar{\hh}^*$ defined by $\delta(\fmh \oplus \CC K \oplus \CC C) = 0 ; \delta(d_0) = 1$ and $\Lambda_0 (\fmh \oplus \CC C \oplus \CC d_0) = 0 ; \Lambda_0 (K) = 1$ and $\omega(\fmh \oplus \CC K \oplus \CC d_0) = 0 ; \omega(C) = 1$. We define a  bilinear form
on $\bar{\hh}^*$ by extending $\alpha \in \fmh^*$ to $\bar{\hh}^*$ by defining $\alpha(K) = \alpha(d_0) = \alpha (C) = 0$. Then $\langle \fmh, K \rangle = \langle \delta, \delta \rangle = \langle \Lambda_0, \Lambda_0 \rangle = \langle \omega, \omega \rangle = 0,   \langle \delta, \Lambda_0 \rangle = 1, \langle \delta, \omega \rangle = 1, \langle \Lambda_0, \omega \rangle = 1$ defines a non-degenerate symmetric bilinear form on $\bar{\hh}^*$. Every $\eta \in \bar{\hh}^*$ can be uniquely written as $\eta = \bar{\eta} + \eta(K)\Lambda_0 + \eta(d_0) \delta + \eta(C) \omega$, where
 $\bar{\eta}$ is the image of $\eta$ under the orthogonal projection from $\bar{\hh}^*$ to $\fmh^*$. Define orders on $\fmh^*$ and $\bar{\hh}^*$  by $\bar{\lambda} \leq_0 \bar{\beta}$ if $\bar{\lambda} - \bar{\beta} \in \overset{\circ}{Q}_{+}$ and $\eta_1 \leq \eta_2$ if $\eta_1 - \eta_2 \in Q_+$.
 The roots of $\LL(A) $ is contained in the set $\{ \alpha + n\delta : \alpha \in \overset{\circ}{\Delta}, n \in \ZZ \}$ and we have:
\[
    \LL(A)_{\alpha + n \delta } := 
\begin{cases}
    (\fma_{\alpha} \otimes t^n)(A) \,\,\,\, &\mathrm{if}\,\,\,\, \alpha \neq 0, n \neq 0;\\
   (\fma_{\alpha} \otimes 1)(A)            & \mathrm{if} \,\,\,\,\alpha \neq 0, n = 0;\\
   (\fmh \otimes t^n)(A) \oplus  d_n (A) & \mathrm{if} \,\,\,\, \alpha = 0, n\neq 0;\\
   \bar{\hh}(A) & \mathrm{if}\,\,\,\, \alpha + n \delta = 0 .
\end{cases}
\]
Note that real and imaginary roots of the affine Lie algebra $\gg$ and $\LL(A)$ are same. For a real root $\beta = \alpha + n\delta $, $\beta^{\vee} = \alpha^{\vee} + \frac{2}{\langle \alpha, \alpha \rangle}K$ be the corresponding co-root of $\beta$, where $\alpha^{\vee}$
 is the co-root of $\alpha \in \overset{\circ}{\Delta}$. The Weyl group of $\LL(A)$ is same as that of $\gg$, which is generated by reflections $r_{\beta}$ defined on $\bar{\hh}^*$ by $r_{\beta}(\lambda) = \lambda - \lambda(\beta^{\vee})\beta$,  where $\beta \in \Delta^{\mathrm{re}}$. We denote it by
 $W$.

A representation $V$ of $\LL(A)$ is called a weight module  if 
$V = \bigoplus_{\lambda \in {\bar{\hh}}^*}{V_{\lambda}}$. The subspaces $V_{\lambda} := \{v \in V: h(v)=  \lambda(h)v,\,\, \forall \,\ h \in \bar{\hh} \}$ are called weight spaces of $V$.  We have the following definition. 
\begin{rmk}
For an irreducible $\LL(A)$ weight module $V$, if $\mathrm{Vir}$ acts trivially on $V$, then any element $x \otimes t^{n}, x \in \fma, 0 \neq n \in \ZZ$ also acts trivially on $V$.  For any $v \in V$,  $x \otimes t^{n}v = \frac{1}{n}[d_0, x\otimes t^n] v$ . Now using the bracket operations
one can deduce that $V$ is a  trivial one dimensional $\LL(A)$-module. \end{rmk}
\begin{dfn}
A representation $V$ of $\LL(A)$ is called integrable representation if the following holds:
\begin{enumerate}
\item $V$ is a weight module with finite dimensional weight spaces.
\item Elements $x_\alpha \otimes a$ act locally nilpotently on $V$ for all $x_{\alpha} \in \gg_{\alpha}, \alpha \in \Delta^{re}$ and $ a \in A$, i.e., 
for every $v \in V$ and $x_\alpha \otimes a$, $\alpha \in \Delta^{re}$, $a \in A$, there exists an integer $N = N(\alpha, a, v)$ such that $(x_{\alpha} \otimes a)^N.{v} = 0$.
\end{enumerate}
\end{dfn}

In this paper we classify all the integrable modules of $\LL(A)$ with finite dimensional weight spaces.  First we note down the following which is standard:
\bppsn \label{wp}
Let $V$ be an irreducible integrable module for $\LL(A)$. Then
\begin{enumerate}
\item $P(V) := \{\gamma \in \bar{\hh^*} |\,  V_{\gamma} \neq 0\}$ is $W$- invariant.
\item $\mathrm{dim}V_{\gamma} = \mathrm{dim}V_{w \gamma}, \,\, \forall \,\, w \in W$.
\item If $\lambda \in P(V)$  and $\gamma \in \Delta^{\mathrm{re}}$, then $\lambda (\gamma^{\vee}) \in \ZZ$.
\item If $\lambda \in P(V)$  and $\gamma \in \Delta^{\mathrm{re}}$, and $\lambda(\gamma^{\vee}) >0$, then $\lambda - \gamma \in P(V)$.
\item For $\lambda \in P(V)$, $\lambda(K)$ is an integer independent of $\lambda$.
\end{enumerate}
\eppsn
Unless stated otherwise $V$ will always denote as irreducible integrable $\LL(A)$-module with finite dimensional weight spaces.  By above Proposition affine center will act on $V$ by integer say $m$. Let Vir center
$C$ acts by a scalar $c \in \CC$.

\noindent
{\bf{Case 1):}} $m \neq 0$ and $c \in \CC$.
We will first define the notion of highest weight modules for $\LL(A)$. Let us consider the natural triangular decomposition of $\LL(A) =  \LL(A)^- \oplus \LL(A)^0 \oplus \LL(A)^{+}$, where $\LL(A)^+ : = (\n_+ \oplus \mathrm{Vir}^{+}) (A)$, $\LL(A)^- : = (\n_- \oplus \mathrm{Vir}^{-})  (A)$, $\LL(A)^0 := \bar{\hh} \otimes A$, where $\mathrm{Vir}^+ = \bigoplus_{n > 0}{\CC d_n}$ and $\mathrm{Vir}^- = \bigoplus_{n < 0}{\CC d_n}$.
\begin{dfn}
A $\LL(A)$ weight module is called a highest weight module with highest weight $\lambda$ if there exists a vector $0 \neq v \in V_{\lambda}$ such that $\LL(A)^+ v = 0$ and $U(\LL(A))v = V$.
\end{dfn}
\begin{rmk}
Note that in the above definition the highest weight space $V_{\lambda}$ could be infinite dimensional.
\end{rmk}
Next we construct highest weight $\LL(A)$-modules with one dimensional highest weight space.
Let $\psi: \LL(A)^0 \mapsto \CC$ be a linear map. Let $\CC v_{\psi}$ be a one dimensional representation of $\LL(A)_0$ assigned by $\psi$. Consider the induced Verma module $M(\psi): =  U(\LL(A)^-)\otimes_{\LL(A)^+ \oplus \LL(A)^0} \CC v_{\psi}$, where $\LL(A)^+$ acts trivially on $\CC v_{\psi}$.
By standard argument $M(\psi)$ will have a maximal submodule say $M'(\psi)$ and we have $V(\psi) : = M(\psi)/M'(\psi)$ an irreducible highest weight module for $\LL(A)$ with a highest weight $\lambda = \psi |_{\bar{\hh} \otimes 1}$.  We see that $M(\psi)$ does not have finite dimensional weight spaces whenever $A$ is infinite dimensional, while $V(\psi)$ may have finite dimensional weight spaces depending on $\psi$.
The following proposition encodes the necessary and sufficient condition for $V(\psi)$ to have finite dimensional weight spaces.
\begin{ppsn}
$V(\psi)$ has finite dimensional weight spaces with respect to $\bar{\hh}$ iff $\psi$ factors through $\bar{\hh} \otimes I$, where $I$ is a cofinite ideal of $A$. In this case we have $(\LL \otimes I) V(\psi)= 0$.
\end{ppsn}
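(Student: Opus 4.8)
The plan is to prove the two implications separately, reading ``$\psi$ factors through $\bar{\hh}\otimes I$'' as the condition $\psi(\bar{\hh}\otimes I)=0$; the supplementary identity $(\LL\otimes I)V(\psi)=0$ will fall out of the easy direction. I first record that for any ideal $I\subseteq A$ the subspace $\LL\otimes I$ is an ideal of $\LL(A)=\LL\otimes A$, so $(\LL\otimes I)V(\psi)$ is an $\LL(A)$-submodule of the irreducible module $V(\psi)$, hence is either $0$ or all of $V(\psi)$.

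For the ``if'' direction, assume $\psi(\bar{\hh}\otimes I)=0$ with $I$ cofinite. Then $\psi$ descends to a character $\bar\psi$ of $\bar{\hh}\otimes(A/I)$; let $\tilde V$ be the irreducible highest weight module for the quotient algebra $\LL\otimes(A/I)$ (whose coefficient ring $A/I$ is finite dimensional) attached to $\bar\psi$, and pull it back to $\LL(A)$ along $\LL(A)\twoheadrightarrow\LL\otimes(A/I)$. This makes $\tilde V$ an irreducible highest weight $\LL(A)$-module on which $\LL\otimes I$ acts by $0$ and whose $\LL(A)^{0}$-character is exactly $\psi$; by the universal property of $M(\psi)$ and uniqueness of its irreducible quotient, $\tilde V\cong V(\psi)$, giving $(\LL\otimes I)V(\psi)=0$. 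Finiteness of the weight spaces is then clear, since $V(\psi)_{\lambda-\beta}$ is a quotient of $U\big((\LL\otimes(A/I))^{-}\big)_{-\beta}$, and as each root space of $\LL$ is finite dimensional and $A/I$ is finite dimensional there are only finitely many PBW monomials of weight $-\beta$.

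For the ``only if'' direction, assume all weight spaces of $V(\psi)$ are finite dimensional and exploit a family of $\mathfrak{sl}_2$-triples. For each simple root $\alpha_i$ of $\gg$ ($0\le i\le n$) let $e_i,f_i,\alpha_i^\vee\in\LL$ be the Chevalley generators of $\gg'\subset\LL$, so that $f_i$ spans the one dimensional root space $\LL_{-\alpha_i}$ and $\alpha_i^\vee\in\bar{\hh}$; here $f_i=x_{-\alpha_i}$ for $i\ge1$ and $f_0=x_{\theta}\otimes t^{-1}$ with $\theta$ the highest root. Because $-\alpha_i$ has a unique expression as a sum of negative roots, $V(\psi)_{\lambda-\alpha_i}$ is spanned by the vectors $(f_i\otimes a)v_{\psi}$, so $J_i:=\{a\in A:(f_i\otimes a)v_{\psi}=0\}$ is cofinite. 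The identities $[\alpha_i^\vee\otimes c,f_i\otimes a]=-2\,f_i\otimes ca$ and $[e_i\otimes1,f_i\otimes a]=\alpha_i^\vee\otimes a$ in $\LL(A)$ (the $\mathfrak{sl}_2$-relations for the $i$-th triple) then show, upon applying to $v_{\psi}$ and using $(e_i\otimes1)v_{\psi}=0$, that $J_i$ is an ideal and that $\psi(\alpha_i^\vee\otimes J_i)=0$. Since $\alpha_0^\vee,\dots,\alpha_n^\vee$ span $\fmh\oplus\CC K$, the cofinite ideal $\bigcap_i J_i$ forces $\psi$ to vanish on $(\fmh\oplus\CC K)\otimes\bigcap_i J_i$.

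The remaining, and genuinely harder, task is to control the $d_0$- and $C$-directions, which no root $\mathfrak{sl}_2$ of $\gg$ detects. For this I would use the two Virasoro triples $\{d_{-1},d_0,d_1\}$ and $\{d_{-2},d_0,d_2\}$. For $n=1,2$ the span of $\{(d_{-n}\otimes a)v_{\psi}\}$ lies inside the finite dimensional weight space $V(\psi)_{\lambda-n\delta}$, so $J^{(n)}:=\{a:(d_{-n}\otimes a)v_{\psi}=0\}$ is a cofinite ideal (the ideal property again coming from bracketing with $d_0\otimes c$). Bracketing $(d_{-n}\otimes a)v_{\psi}$ with $d_n\otimes1$ and using $[d_n,d_{-n}]=-2n\,d_0+\tfrac{n^3-n}{12}C$ yields, for $a\in J^{(n)}$, the scalar relation $-2n\,\psi(d_0\otimes a)+\tfrac{n^3-n}{12}\,\psi(C\otimes a)=0$. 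At $n=1$ the cocycle term vanishes and gives $\psi(d_0\otimes J^{(1)})=0$; substituting this into the $n=2$ relation, where $\tfrac{n^3-n}{12}=\tfrac12\ne0$, forces $\psi(C\otimes(J^{(1)}\cap J^{(2)}))=0$. Taking $I:=\big(\bigcap_{i=0}^{n}J_i\big)\cap J^{(1)}\cap J^{(2)}$, a finite intersection of cofinite ideals and hence itself cofinite, and recalling $\bar{\hh}=\fmh\oplus\CC K\oplus\CC d_0\oplus\CC C$, we conclude $\psi(\bar{\hh}\otimes I)=0$. The main obstacle is precisely this separation of the central direction from $d_0$: it depends on carefully tracking the central contributions (the $K$ hidden in $\alpha_0^\vee$ and the Virasoro cocycle), and on the fact that $\tfrac{n^3-n}{12}$ vanishes at $n=1$ but not at $n=2$, which is what makes the two scalar relations independent.
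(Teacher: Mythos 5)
Your proof is correct, and its essential content matches the paper's: the paper cites Proposition 1.1 of \cite{R1} for the existence of a cofinite ideal killing $\psi$ on $(\fmh\oplus\CC K\oplus\CC d_0)\otimes I$ and for the converse, and its only new step is exactly your final one — isolating the independent Virasoro center by bracketing $d_2$ with $d_{-2}(a)$ for $a$ in the cofinite ideal $\{a: d_{-2}(a)v_\psi=0\}$, where the relation $-4\,\psi(d_0\otimes a)+\tfrac12\,\psi(C\otimes a)=0$ together with the already-established vanishing of $\psi$ on $d_0\otimes I$ forces $\psi(C\otimes I)=0$. The parts you prove by hand (the $\mathfrak{sl}_2$-triple argument for $\fmh\oplus\CC K$, the $d_{\pm1}$ triple for $d_0$, and the quotient-algebra construction for the converse) are precisely what the paper delegates to \cite{R1}, so your writeup is a self-contained version of the same argument rather than a different route.
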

\begin{proof}
Let $V(\psi)$ has finite dimensional weight spaces. As in the proof of Proposition 1.1 of \cite{R1} there exists a co-finite ideal $I_1$ of $A$ such that $(\hh' \oplus d_0)(I_1)v=0$, where $v$ denote the highest weight vector. Now consider $I_2= \{a \in A  :   d_{-2}(a)v=0\}$. It is clear that $I_2$  is a co-finite ideal of $A$. To see this for $b \in A$ and $a \in I_2$, consider $[d_0(b), d_{-2} (a)] v = d_0 b \,\, d_{-2}(a) v - \psi(d_0(b)) \,d_{-2}(a) v$. Take $I=I_1I_2$ and consider 
		$$d_2d_{-2}(I)v=d_{-2}(I)d_2v + [d_2 , d_{-2}](I)v={-4}d_{0}(I)v + \frac{1}{2}C(I)v.$$
		Since $I \subseteq I_1$,  $d_{0}(I)v=0$,  we get  $C(I)v=0$, and therefore  $\bar{\mathfrak{h}}(I)v=0$. Remaining part of Proposition follows by same argument as in Proposition 1.1 of \cite{R1}.

\end{proof}
\begin{ppsn}
	Let $V$ is an irreducible integrable module for $\mathcal{L}(A)$. Suppose the central element of affine Lie algebra $K$ acts non-trivially. Then $V$ is a highest weight module or a lowest weight module.
\end{ppsn}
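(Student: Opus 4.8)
The plan is to locate a maximal (or minimal) weight and show that the corresponding weight vector generates $V$ as a highest (resp.\ lowest) weight module. By Proposition \ref{wp} the weight set $P(V)$ is $W$-invariant with $W$-invariant, finite weight multiplicities, and $K$ acts by the fixed integer $m\neq 0$. Since $d_0\otimes 1\in\LL(A)$ lies in $\LL(A)^0=\bar{\hh}\otimes A$, each weight $\lambda$ carries an \emph{energy} $\lambda(d_0)$, which is precisely the coefficient of $\delta$ in the decomposition $\lambda=\bar\lambda+m\Lambda_0+\lambda(d_0)\delta+\lambda(C)\omega$. The whole argument is organised around this energy: I would show it is bounded above when $m>0$ and bounded below when $m<0$, and then read off a highest (resp.\ lowest) weight vector. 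The Virasoro part plays no role in the boundedness step, since the weights of $\LL(A)$ already lie in $\{\alpha+n\delta\}$ and the $d_n\otimes a$ sit in imaginary root spaces, so the integrability hypothesis (local nilpotence of real root vectors) is untouched by them.

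First I would analyse a single Weyl orbit. Writing $W=\overset{\circ}{W}\ltimes T$ with $T$ the translation subgroup by the coroot lattice $\overset{\circ}{Q}{}^{\vee}$, a direct computation with $t_{n\mu}$ ($\mu\in\overset{\circ}{Q}{}^{\vee}$) shows that the energy of $t_{n\mu}\lambda$ equals $\lambda(d_0)-n\langle\bar\lambda,\mu\rangle-\tfrac12 n^{2}\langle\mu,\mu\rangle\,m$. As $\langle\mu,\mu\rangle>0$ and $m\neq0$, this is a concave quadratic in $n$ when $m>0$ (convex when $m<0$), so the energy is bounded above (resp.\ below) along every single orbit. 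The real work is to upgrade this orbitwise bound to a uniform bound over all of $P(V)$: if the energy were unbounded above (say $m>0$), then integrability together with finiteness of the weight multiplicities would force, through the real root strings $\lambda,\lambda-\gamma,\dots$ and the $W$-invariance of multiplicities, either an infinite-dimensional weight space or an impossible accumulation of weights, exactly as in \cite{R1}. I expect this global boundedness to be the main obstacle; passing from the coefficient ring $\CC$ to a general $A$ is only a cosmetic change, because a weight is detected solely on $\bar{\hh}$ and the tensor factors from $A$ merely enlarge each root space without altering its weight, so Rao's argument applies with the promised minimal modifications.

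Finally, assume $m>0$, so the energy attains a maximum $s_{\max}$ (the case $m<0$ is symmetric). One checks, again as in \cite{R1}, that the weights of energy $s_{\max}$ form a finite set, and hence I may choose $\lambda$ of energy $s_{\max}$ that is maximal under adding positive classical roots, i.e.\ $\lambda+\alpha\notin P(V)$ for every $\alpha\in\fpr$. I claim then $\lambda+\beta\notin P(V)$ for all $\beta\in\Delta^{+}$: a positive root with nonzero $\delta$-coefficient raises the energy above $s_{\max}$, while a positive root $\beta=\alpha\in\fpr$ keeps the energy at $s_{\max}$ and was excluded by the choice of $\lambda$. Thus for $0\neq v\in V_\lambda$ we get $(\n_+\otimes A)v=0$; moreover each $d_n\otimes a$ with $n>0$ raises the energy, so $(\mathrm{Vir}^{+}\otimes A)v=0$ as well, giving $\LL(A)^{+}v=0$. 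By irreducibility $V=U(\LL(A))v$, so $V$ is a highest weight module, and the symmetric argument at the minimal energy when $m<0$ produces a lowest weight vector, completing the proof.
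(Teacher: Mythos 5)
Your endgame coincides with the paper's: produce a weight $\lambda$ that is maximal both in energy and in the classical directions, note that every element of $\LL(A)^{+}$ maps $V_{\lambda}$ into weight spaces that must vanish, and invoke irreducibility to conclude $V=U(\LL(A))v$. The difference is how that maximal weight is produced. The paper gets it in one stroke: it applies Theorem 2.4 of \cite{CHA} to $V$ viewed as an integrable module for the affine subalgebra $\gg$, which directly supplies $v\in V_{\lambda}$ with $\n_{+}v=0$ together with the vanishing $V_{\lambda+\alpha+n\delta}=0$ for positive affine roots $\alpha+n\delta$; the only genuinely new content is the easy observation that $\n_{+}(a)$ and $d_n(a)$, $n>0$, then also kill $v$ for pure weight reasons, and this part you reproduce correctly.

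The gap is in your middle paragraph. Your quadratic computation with $t_{n\mu}$ is correct but bounds the energy only along each single $W$-orbit; the uniform bound over all of $P(V)$ -- which you yourself flag as the main obstacle -- is precisely the content of Chari's theorem, and the mechanism you sketch for it does not work as stated. If the energy were unbounded above, $W$-invariance of multiplicities plus finite-dimensionality of weight spaces yields no immediate contradiction: translating a weight of very large energy $s_k$ back down to a fixed energy range by a suitable $t_{n\mu}$ shifts its classical part by $mn\mu$, so the translates of the various high-energy weights land in pairwise distant weight spaces, and no single weight space is forced to accumulate infinite dimension; there is no ``impossible accumulation'' without substantially more structure, which is exactly what Chari's proof provides. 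Likewise, the finiteness of the set of top-energy weights is asserted rather than proved, although this step is recoverable with the paper's tools: by Proposition \ref{wp}(4) applied to the real root $\gamma=\theta-\delta$, every top-energy weight $\mu$ satisfies $\bar{\mu}(\theta^{\vee})\leq m$ (otherwise $\mu+\delta-\theta\in P(V)$ would have larger energy), and since the top-energy set is $\overset{\circ}{W}$-stable and lies in a single coset of $\overset{\circ}{Q}$, it is finite. As written, your argument reduces the proposition to the exact statement of Theorem 2.4 of \cite{CHA} without proving it or citing it precisely; the repair is simply to quote that theorem (as the paper does), after which the rest of your argument goes through.
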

\begin{proof}
We know that $K$ acts by an integer $m \neq 0$. We can assume $m$ is a positive integer while the case when $m$ is negative can be dealt similarly to get a lowest weight module. Let $V = \oplus_ {\lambda \in {\bar{\mathfrak{h}^{*}}}} V_{\lambda}$, dim $V_{\lambda} < \infty$. Now by Theorem 2.4 of \cite{CHA}, there exists $v \in V_{\lambda}$ such that $\mathfrak{n}_{+}v=0$. That means $V_{\lambda + \alpha + n\delta}=0$  for $ \alpha \in \overset{\circ}{\Delta} , n > 0 ,  V_{\lambda + n\delta}= 0$ for $ n>0$, $V_{\lambda + \alpha}=0$ for $ \alpha \in \overset{\circ}{\Delta}_{+}$. In particular $\mathfrak{n}_{+}(a)v=0$ for any $a\in A$ . Since $d_n (a)v \in V_{\lambda + n\delta}$  for any $a\in A$ , therefore for $n>0$,  $d_n (a)v=0$. Therefore $\mathcal{L}(A)^{+} v=0$. Now as $\LL(A)^0$ is an abelian Lie algebra and $V_{\lambda}$ is a finite dimensional irreducible representation of $V_{\lambda}$, hence $\mathrm{dim} \, V_{\lambda} = 1$.
\end{proof}
 We will now classify integrable highest weight modules $V(\psi)$. In other words we have to find necessary and sufficient condition on $\psi$ for that $V(\psi)$ is integrable. First, as $A$ is a finitely generated algebra over $\CC$, it is a Jacobson ring and hence every radical ideal is an intersection of finitely many maximal ideals.
 Let $I$ be a cofinite ideal of $A$ and $\sqrt{I}$ be radical ideal of $I$. Then by above $\sqrt{I} := \{ a \in A: a^n \in I, \, \mathrm{for} \,\, \mathrm{some}\,\, n \} = \M_1 \cap \M_2 \cap \cdots \cap \M_s$. Then it is easy to see that for any Lie algebra $\mathfrak{m}$,  $\mathfrak{m} \otimes \frac{A}{\sqrt{I}} \cong \bigoplus_{i = 1}^{s}{\mathfrak{m}_i}$, where $\mathfrak{m_i}$ is an isomorphic copy of $\mathfrak{m}$, $1 \leq i \leq s$. Recall that $\lambda \in \hh^*$ is called dominant integral if $\lambda(\alpha_i^{\vee}) \in \ZZ_{\geq 0}$ for $ 0 \leq i \leq n$. We have the following definition:
 \begin{dfn}
  Let $I$ be a cofinite ideal of $A$. Let $\psi: \bar{\hh} \otimes A \rightarrow \CC$ be a linear map such that $\psi(\hh' \otimes \sqrt{I}) = 0$. Then $\psi$ is said to be dominant integral (with respect to $\sqrt{I}$) if each $\psi_i$  is dominant integral for $1 \leq i \leq s$, where $\psi_i$ is the restriction of $\psi$ on the $i$-th copy of
  $\bigoplus_{i =1}^{s}{\hh'_i} = \hh' \otimes \frac{A}{\sqrt{I}}$.
 \end{dfn}

   \begin{thm}
   	Let $V(\psi)$ is integrable module for $\mathfrak{L}(A)$. Then there exists a co-finite ideal $I$ of $A$ such that $\psi(d_{0} \otimes I)=0$ , $\psi(C \otimes I)=0$ , $\psi (\fmh \oplus \CC K \otimes \sqrt{I})=0$.
   	
   	\begin{proof}
   		Follows from proposition 2.1 and theorem 2.2 of \cite{R1}.
   		\end{proof}
   \end{thm}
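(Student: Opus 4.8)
The plan is to separate the claim into its soft part --- the vanishing of $\psi$ on $d_0\otimes I$ and $C\otimes I$ --- which follows from finite dimensionality of the weight spaces, and its genuine part --- the vanishing of the $\fmh\oplus\CC K$ component on the \emph{radical} $\sqrt I$ --- which is where integrability of the affine directions must be exploited. First I would note that, by definition, an integrable module has finite dimensional weight spaces, so $V(\psi)$ does; hence the preceding Proposition applies and yields a cofinite ideal $I$ of $A$ with $\psi(\bar\hh\otimes I)=0$ and $(\LL\otimes I)V(\psi)=0$. This already gives $\psi(d_0\otimes I)=0$ and $\psi(C\otimes I)=0$, two of the three required identities, together with the a priori weaker statement $\psi((\fmh\oplus\CC K)\otimes I)=0$. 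Since $\LL\otimes I$ annihilates $V(\psi)$, the module factors through $\LL(A/I)$ with $B:=A/I$ finite dimensional, and the whole problem is reduced to showing that the $(\fmh\oplus\CC K)$-component of $\psi$ kills the nilradical $\sqrt I/I$ of $B$.

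Next I would reduce the remaining statement to a family of rank-one computations. For each affine simple root $\alpha_i$, $0\le i\le n$ (all of which are real roots), fix the $\mathfrak{sl}_2$-triple $(e_{\alpha_i},f_{\alpha_i},\alpha_i^\vee)$ inside $\gg'$ and consider the subalgebra $\mathfrak{sl}_2^{\alpha_i}\otimes A\subset\LL(A)$. The highest weight vector $v$ is annihilated by $e_{\alpha_i}\otimes A\subset\LL(A)^+$, the elements $\alpha_i^\vee\otimes a$ act on $v$ by the scalar $\psi(\alpha_i^\vee\otimes a)$, and by integrability each $f_{\alpha_i}\otimes a$ acts locally nilpotently. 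Thus $v$ generates an integrable highest weight module for $\mathfrak{sl}_2\otimes A$ whose highest weight is the linear functional $a\mapsto\psi(\alpha_i^\vee\otimes a)$.

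I would then invoke the structure theory of such modules --- this is exactly Proposition 2.1 and Theorem 2.2 of \cite{R1}, equivalently the evaluation description of finite dimensional modules of $\fma\otimes A$ in \cite{CFK,NSS}. Because $V(\psi)$ factors through $\LL(A/I)$, it suffices to work over the finite dimensional algebra $B=A/I$, and the theory then asserts that the coroot functional $a\mapsto\psi(\alpha_i^\vee\otimes a)$ is a finite sum $\sum_k m_{i,k}\,\mathrm{ev}_{\M_{i,k}}$ of dominant integral (so $m_{i,k}\in\ZZ_{\ge 0}$) evaluations at maximal ideals $\M_{i,k}$ of $B$, i.e. at maximal ideals $\M_{i,k}\supseteq I$ of $A$. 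Since $\sqrt I=\bigcap_{\M\supseteq I}\M\subseteq\M_{i,k}$, each such evaluation annihilates $\sqrt I$, whence $\psi(\alpha_i^\vee\otimes\sqrt I)=0$ for every $i$. As $\alpha_0^\vee,\dots,\alpha_n^\vee$ span $\fmh\oplus\CC K$, this gives $\psi((\fmh\oplus\CC K)\otimes\sqrt I)=0$ and completes the argument; the same $I$ serves all $i$, so no further intersection of ideals is required.

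The main obstacle is the rank-one input itself: that an integrable highest weight functional for $\mathfrak{sl}_2\otimes A$ cannot be supported on nilpotent elements of $A$. This is precisely where local nilpotence and the commutative structure of $A$ interact, and it is the substance of \cite{R1}. A self-contained derivation would pass to the dual-number quotients $\CC[\epsilon]/(\epsilon^2)$ and analyse the $\mathfrak{sl}_2$-string generated by $f_{\alpha_i}\otimes 1$ on $v$, using that $f_{\alpha_i}\otimes a^2$ already annihilates $V(\psi)$ when $a^2\in I$ to force $\psi(\alpha_i^\vee\otimes a)=0$, and then induct on the nilpotency index; but since the clean statement is available in \cite{R1}, I would simply cite it as the author does.
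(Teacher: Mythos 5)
Your proposal is correct and, at bottom, takes the same route as the paper: the vanishing of $\psi$ on $d_0\otimes I$ and $C\otimes I$ is read off from the preceding proposition (which is indeed where the independent Virasoro center must be handled, since in \cite{R1} the two centers coincide), and the essential statement $\psi((\fmh\oplus\CC K)\otimes\sqrt{I})=0$ is obtained from Proposition 2.1 and Theorem 2.2 of \cite{R1}, exactly as in the paper's one-line proof. Your $\mathfrak{sl}_2\otimes A$ rank-one reduction, the evaluation description over $A/I$, and the span argument with $\alpha_0^\vee,\dots,\alpha_n^\vee$ simply unpack what that citation provides, so this is the same argument presented in more detail rather than a genuinely different one.
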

We will now determine the condition on $\psi$ for which $V(\psi)$ is integrable.

 	 \begin{thm}
 Let $V(\psi)$ be an highest weight module for $\LL(A)$ with finite dimensional weight spaces. Assume that there is a co-finite ideal $I$ of $A$ such that
 $\psi(d_{0} \otimes I)=0$ , $\psi(C \otimes I)=0$ , $\psi (\fmh \oplus \CC K \otimes \sqrt{I})=0$ and $\psi$ dominant integral with respect to $\sqrt{I}$. Then $V(\psi)$ is integrable.
 
 \begin{proof}
 Proof is exactly similar to Theorem 2.3 of \cite{R1}. Note that the Virasoro center won't create any complications.
 \end{proof}
\end{thm}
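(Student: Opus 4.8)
The plan is to verify only the local nilpotency condition (2) in the definition of integrability, since $V(\psi)$ is a weight module by construction and the finiteness of its weight spaces is part of the hypothesis; concretely, I must show that every real root vector $x_\alpha \otimes t^n \otimes a$ with $\alpha + n\delta \in \Delta^{\mathrm{re}}$ and $a \in A$ acts locally nilpotently on $V(\psi)$. The first step is to reduce to a finite dimensional coefficient algebra. Because $\psi(\fmh \oplus \CC K \otimes \sqrt{I}) = 0$ and $\psi(d_0 \otimes I) = \psi(C \otimes I) = 0$, the map $\psi$ factors through $\bar{\hh} \otimes I$, so the preceding Proposition applies and gives $(\LL \otimes I)V(\psi) = 0$. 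Hence the action descends to $\LL \otimes (A/I)$ with $A/I$ Artinian, and I may replace $A$ by $A/I \cong \bigoplus_{i=1}^{s} B_i$, a finite direct sum of local rings with residue field $\CC$ and nilpotent maximal ideals $\m_i$, so that $A/\sqrt{I} \cong \bigoplus_{i=1}^{s} \CC$.

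The engine of the argument is the standard local-nilpotency propagation lemma: if $x \in \LL(A)$ is such that $\mathrm{ad}(x)$ is locally nilpotent on $\LL(A)$, then $\{v \in V : x \text{ acts locally nilpotently on } v\}$ is an $\LL(A)$-submodule, via the expansion $x^N(yv) = \sum_k \binom{N}{k}\bigl(\mathrm{ad}(x)^k y\bigr)\bigl(x^{N-k}v\bigr)$. For a real root vector $x = x_\alpha \otimes t^n \otimes a$ one checks directly that $\mathrm{ad}(x)$ is locally nilpotent on all of $\LL(A)$: $\mathrm{ad}(x_\alpha)$ is nilpotent on $\fma$, any central term is killed after one further bracket, and a single bracket sends a Virasoro element $d_m \otimes b$ into the $\fma$-part, so the Virasoro directions cause no difficulty. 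Consequently, by irreducibility of $V(\psi)$, to prove that such an $x$ acts locally nilpotently on all of $V$ it suffices to produce a single nonzero vector on which it does so, and the highest weight vector $v_{\psi}$ is the natural candidate.

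For positive real roots the vector $x_\alpha \otimes t^n \otimes a$ raises the weight and annihilates $v_{\psi}$, so there is nothing to prove. For the lowering generators I first reduce to the simple roots: once the Chevalley generators $e_i \otimes 1$ and $f_i \otimes 1$ (for $0 \le i \le n$) are known to act locally nilpotently on $V$, the associated integral operators $\exp(e_i)\exp(-f_i)\exp(e_i)$ are well defined automorphisms of $V$ realising the Weyl group $W$, and they conjugate $f_j \otimes a$ to $\pm\,x_\beta \otimes a$ for $\beta \in W\alpha_j$ while fixing the tensor factor $A$; thus local nilpotency for the simple lowering generators propagates to all real roots. It therefore remains to show that each $f_j \otimes a$ acts locally nilpotently on $v_{\psi}$. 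Writing $a$ according to $A/I = \bigoplus_i B_i$ as a sum of an idempotent-supported (semisimple) part and a nilpotent part in $\sqrt{I}$, I treat $f_j \otimes a$ through the $\mathfrak{sl}_2$-theory of the triple attached to $\alpha_j$: on the semisimple part, dominant integrality gives $\psi(\alpha_j^{\vee} \otimes \varepsilon) \in \ZZ_{\ge 0}$, so $(f_j \otimes \varepsilon)^{\psi(\alpha_j^{\vee} \otimes \varepsilon)+1} v_{\psi}$ is a singular vector and vanishes in the irreducible quotient; on the nilpotent part $m \in \sqrt{I}$ the hypothesis $\psi(\fmh \oplus \CC K \otimes \sqrt{I}) = 0$ forces $(f_j \otimes m)\,v_{\psi}$ itself to be singular, hence zero. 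Since these two contributions commute, $f_j \otimes a$ is locally nilpotent on $v_{\psi}$.

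The step I expect to be the main obstacle is the nilpotent case: verifying that $(f_j \otimes m)\,v_{\psi}$ (or a bounded power of it) is annihilated by all of $\LL(A)^{+}$, so that it is singular and vanishes by irreducibility. This is exactly where the non-reduced structure of $A/I$ enters and where one must check that the raising operators $e_k \otimes b$ and, crucially, the Virasoro operators $d_k \otimes b$ with $k > 0$ produce no obstruction; the Virasoro center $C$ enters only through central terms in these brackets and, as remarked, contributes nothing new, so the bookkeeping is identical to that of Theorem 2.3 of \cite{R1}. Assembling the three ingredients — reduction to $A/I$, the submodule propagation lemma together with the Weyl group operators, and the two-case $\mathfrak{sl}_2$ computation on $v_{\psi}$ — yields local nilpotency of every real root vector and hence the integrability of $V(\psi)$.
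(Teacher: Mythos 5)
Your proposal is correct and is essentially the argument the paper appeals to: the paper's own proof is only a citation to Theorem 2.3 of \cite{R1}, and your steps — reduction modulo the cofinite ideal via the preceding Proposition, the semisimple-plus-nilpotent decomposition of coefficients in $A/I$ handled by $\mathfrak{sl}_2$-theory on the highest weight vector, and the ad-nilpotency propagation lemma combined with Weyl-group conjugation — are exactly the standard ingredients of that proof. Your closing observation that the raising operators $e_k \otimes b$, the Virasoro operators $d_k \otimes b$ with $k>0$, and the center $C$ produce no obstruction is precisely the point the paper flags with ``the Virasoro center won't create any complications.''
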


\section{Affine center acting trivially}\label{sec3}
\noindent
{\bf{Case 2):}} $m = 0$ and $c \in \CC$. To make inroads in this case we need to consider different triangular decomposition. First, for any Lie algebra $\mathfrak{m}$, we denote $L[\mathfrak{m}]: = \mathfrak{m} \otimes \CC[t, t^{-1}]$. Let us define
$\LL(A)_{+} = L[\overset{\circ}{\mathfrak{n}}_{+}](A)$, \,$ \LL(A)_{0} = ((L[\fmh]\oplus \CC K )\rtimes \mathrm{Vir}) (A),  \,\,\LL(A)_{-} = L[\overset{\circ}{\mathfrak{n}}_{-}]( A)$.  Now we define class of irreducible integrable $\LL(A)$ modules where the affine center acts trivially.
Let $S$ be a finite dimensional irreducible $\fma$-module. Let $\mathcal{M}$ be a maximal ideal of $A$ and $\psi: A \rightarrow \frac{A}{\mathcal{M}} \cong \CC$ be a algebra homomorphsim. For $\alpha , \beta \in \CC$, define the action of 
$\mathfrak{L}(A)$ on $S \otimes \CC[t, t^{-1}]$ by :
\begin{align*}
(X \otimes t^m)(a)(v \otimes t^n) &=  \psi(a)X(v) \otimes t^{m+n} ;\\
d_n (b)(v \otimes t^m) & = \psi(b)(m + \alpha + \beta + \beta n) v \otimes t^{m+n};\\
K(a) (v \otimes t^{n}) & = 0; \\
C (b)(v \otimes t^{n}) & = 0\,\, ,\,\, a,b \in \CC, X \in \fma.
\end{align*}
It is routine checking to show that with the above action $S \otimes \CC[t, t^{-1}]$ has a $\LL(A)$-module structure and we will denote it as $\LL(S, \alpha, \beta, \psi)$.  In fact it is not hard to see that $\LL(S, \alpha, \beta, \psi)$ is an irreducible $\LL(A)$-module. The following is main result of this section:
\begin{thm}\label{thm}
Let $V$ be an irreducible integrable $\LL(A)$-module with affine center acting trivially on it. Then either $V$ is trivial one dimensional $\LL(A)$-module or $V \cong \LL(S, \alpha, \beta, \psi)$ for some $\alpha, \beta \in \CC$ and $\psi$ as above.
\end{thm}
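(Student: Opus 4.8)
The plan is to attack the problem through the ``horizontal'' triangular decomposition $\LL(A) = \LL(A)_- \oplus \LL(A)_0 \oplus \LL(A)_+$ with $\LL(A)_\pm = L[\overset{\circ}{\mathfrak{n}}_\pm](A)$ and $\LL(A)_0 = ((L[\fmh]\oplus \CC K)\rtimes \mathrm{Vir})(A)$, and to show that $V$ is generated from a highest weight space small enough to be governed by the existing classifications for $\mathrm{Vir}\otimes A$ and for loop affine algebras. First I would restrict to the copy $\fma = \fma\otimes 1\otimes 1$, so that integrability makes $V$ an integrable $\fma$-module with $W$-invariant weight set (Proposition \ref{wp}). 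Since $K\otimes 1$ acts by $0$, this is a level-zero situation, and by the structure theory of integrable level-zero modules \cite{CHA} the $\fmh^*$-projection of $P(V)$ is a bounded, $W$-invariant set, hence possesses a maximal finite weight $\bar\lambda$. For such $\bar\lambda$ every element of $\LL(A)_+ = \np\otimes \CC[t,t^{-1}]\otimes A$ raises the finite weight out of $P(V)$, so $M := \bigoplus_{\bar\lambda\ \mathrm{maximal}} V_\lambda$ is a nonzero highest weight space with $\LL(A)_+ M = 0$. As $\LL(A)_0$ preserves the finite weight it preserves $M$, and irreducibility together with the PBW theorem give $V = U(\LL(A)_-)M$.

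The next and central step is to bound $\dim V_\lambda$ uniformly, the grading now being by the degree $\lambda(d_0)$. On $M$ the finite part $\fmh$ acts by a fixed character, so $M$ is effectively a module for the loop Heisenberg--Virasoro algebra $((L[\fmh]\oplus\CC K)\rtimes\mathrm{Vir})(A)$ at affine level $0$. Using $K\otimes 1 = 0$ together with the commutator computation that produced $C$ from $d_2 d_{-2}$ in the Verma-module proposition above, and feeding in the integrability of the full module, I would establish that the weight spaces of $V$ are uniformly bounded both as an $\LL(A)$-module and as a $\mathrm{Vir}$-module (Proposition \ref{ps1}). The decisive consequence of this uniform bound is that $V$ carries only finitely many $\gg(A)$-submodules, which is the lever for everything downstream.

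With uniform boundedness and this finiteness in hand I would invoke Futorny's theorem \cite{F} to upgrade $K\otimes 1 = 0$ to triviality of the entire affine center $K\otimes A$ on $V$. The finiteness of $\gg(A)$-submodules then forces a cofinite ideal $R\subset A$ with $(\LL\otimes R)V = 0$ (Proposition \ref{mp1}), so $V$ is a module for the finite-dimensional quotient $\LL\otimes (A/R)$. At this point the classification of Harish-Chandra modules for $\mathrm{Vir}\otimes A$ of \cite{ZRX} and \cite{SAV} lets me replace $R$ by a single maximal ideal $\M$: irreducibility, together with the boundedness already proved, rules out the genuine direct-sum splitting that $A/\sqrt{R}\cong\bigoplus_i A/\M_i$ would otherwise induce, so a single character $\psi : A\to A/\M\cong\CC$ survives. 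Finally $V$ is an irreducible integrable module for the loop affine-Virasoro algebra supported at one point, and the classification of such loop modules in \cite{R4} identifies $V$ either with the trivial one-dimensional module or with $\LL(S,\alpha,\beta,\psi)$, the scalars $\alpha,\beta$ recording the $d_0$- and $C$-eigenvalue data of the top space.

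I expect the main obstacle to be Proposition \ref{ps1}, that is, controlling the Virasoro direction. Bounding the $\fmh$-weights is immediate from $\fma$-integrability, but there is no a priori integrability hypothesis on $\mathrm{Vir}$, so obtaining a uniform bound on $\dim V_\lambda$ across the degree grading --- and hence the finiteness of $\gg(A)$-submodules --- is precisely what makes the triviality of $K\otimes A$ and the cofinite-ideal reduction possible; once that is secured, the remainder is an assembly of the cited classifications.
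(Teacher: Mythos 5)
Your road map coincides with the paper's own proof: the same horizontal triangular decomposition, the same highest weight space $M$ annihilated by $L[\np](A)$, then uniform boundedness of weight spaces, Futorny's theorem to kill $K(A)$, a cofinite ideal $R$ with $\LL(R)V=0$, reduction to a single maximal ideal via \cite{ZRX} and \cite{SAV}, and a final appeal to the one-point classification. The problem is that the step you yourself identify as the crux --- Proposition \ref{ps1} and the finiteness of $\gg(A)$-submodules --- is exactly the step for which you give no valid argument. The mechanism you propose would not work: the $d_2d_{-2}$ commutator computation is used in Section \ref{sec2} only to show that $C(I)$ kills a highest weight vector in a module \emph{already assumed} to have finite dimensional weight spaces; it produces no bound on dimensions along the $\delta$-direction. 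Likewise, integrability is a local nilpotence hypothesis on real root vectors $x_\alpha \otimes t^n \otimes a$ and says nothing directly about the $\delta$-string of weight spaces of $M$, on which only $((L[\fmh]\oplus\CC K)\rtimes \mathrm{Vir})(A)$ acts.

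The missing idea is the level-zero affine Weyl translation argument (Proposition \ref{vcp}, taken from \cite{CG}). Since $K$ acts by $0$, the translations $t_\alpha \in T_L \subset W$ act on weights by $t_{\alpha}(y) = y - \langle y, \alpha \rangle \delta$, i.e.\ they move weights purely in the $\delta$-direction in steps controlled by $s_{\lambda}$; combined with the $W$-invariance of weight multiplicities (Proposition \ref{wp}), every weight space $M_{\bar{\lambda}+(\xi+r)\delta+c\,\omega}$ has the same dimension as one with $0 \leq l < s_{\lambda}$. This single observation yields \emph{both} the uniform bound on the weight spaces of $M$ (and then of $V$, using that $P(V)$ has only finitely many finite parts) \emph{and} the finiteness of the family of $\gg(A)$-submodules, since any such submodule is generated by vectors of the finite dimensional space $\tilde{M}=\bigoplus_{l=0}^{s_{\lambda}-1}M_{\bar{\lambda}+(\xi+l)\delta+c\,\omega}$. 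Note the logical order: the finiteness of submodules is \emph{not} a consequence of uniform boundedness, as you assert; both are consequences of Proposition \ref{vcp}. Without this, Futorny's result (Proposition \ref{pf1}) cannot be applied --- it needs the uniform bound to contradict a nonzero central charge --- and the entire downstream chain (triviality of $K(A)$, the cofinite ideal of Proposition \ref{mp1}, the reduction to a maximal ideal) collapses. A smaller point: in your last sentence, $C$ acts by zero on $\LL(S,\alpha,\beta,\psi)$, so triviality of the Vir center is something to be \emph{proved}, not a recorded eigenvalue; in the paper it follows from the uniform bound on the $d_0$-weight spaces of $M$ via \cite{CHPI}.
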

We need several results to achieve Theorem \ref{thm}. 

Let $M = \{v \in V : x_{\alpha} \otimes  t^{n} v = 0, \forall \,\, x_\alpha \in \fma_{\alpha}, \alpha \in \fpr, n \in \ZZ \}$. Note that $M$ is a highest weight space with respect to new triangular decomposition.
\begin{ppsn} M is non-zero irreducible module for $\LL(A)_0$.
\end{ppsn}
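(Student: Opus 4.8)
The plan is to treat this as a parabolic/Levi situation for the decomposition $\LL(A) = \LL(A)_- \oplus \LL(A)_0 \oplus \LL(A)_+$, with $M = \{v\in V : \LL(A)_+ v = 0\}$ the space of highest weight vectors. First I would record the grading that governs everything. Fix $\rho^{\vee}\in\fmh$ with $\alpha_i(\rho^{\vee})=1$ for $1\le i\le n$ and set $h(\mu)=\bar{\mu}(\rho^{\vee})$, so that $h(\alpha)>0$ for every $\alpha\in\fpr$ and $h(\delta)=h(\omega)=h(\Lambda_0)=0$, i.e. $h(\mu)=h(\bar{\mu})$. Every root vector of $\LL(A)_+$ has weight $\alpha+n\delta$ with $\alpha\in\fpr$, so it raises $h$ by $h(\alpha)>0$; the generators of $\LL(A)_-$ lower $h$; and the weights of $\LL(A)_0$ all lie in $\ZZ\delta$, so $\LL(A)_0$ preserves $h$. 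Thus $V=\bigoplus_r V[r]$, with $V[r]:=\bigoplus_{h(\mu)=r}V_\mu$, is a grading in which $\LL(A)_{\pm}$ strictly raise/lower the degree and $\LL(A)_0$ fixes it. In particular a direct bracket computation ($[h\otimes t^m\otimes a,\,x_\alpha\otimes t^k\otimes b]=\alpha(h)\,x_\alpha\otimes t^{m+k}\otimes ab$, $[d_m\otimes a,\,x_\alpha\otimes t^k\otimes b]=k\,x_\alpha\otimes t^{m+k}\otimes ab$, and $K,C$ acting by zero) shows $\LL(A)_0$ normalizes $\LL(A)_+$, so $M=\ker\LL(A)_+$ is an $\LL(A)_0$-submodule of $V$.

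The heart of the matter is to show $M\neq 0$, and this is exactly the assertion that $h$ is bounded above on $P(V)$. By Proposition \ref{wp} the set $P(V)$ is $W$-invariant, and since the affine center acts by $0$ the finite reflections $r_{\alpha_i}$ act on the classical projection by $\bar{\lambda}\mapsto r_{\alpha_i}(\bar{\lambda})$; hence $\bar{P}(V):=\{\bar{\lambda}:\lambda\in P(V)\}$ is invariant under the finite Weyl group $\overset{\circ}{W}$. If $h$ attains a maximum $r_0$ on $P(V)$, then $V[r_0]$ is annihilated by $\LL(A)_+$ and so lies in $M$; conversely, a nonzero $M$ forces $h$ to be bounded above. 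So the whole problem reduces to showing that the classical support $\bar{P}(V)$ is finite (equivalently $h$ is bounded above). This is the main obstacle. I would establish it from the structure theory of integrable modules over affine Kac--Moody algebras at level zero: restricting $V$ to $\fma\otimes\CC[t,t^{-1}]\otimes 1 \oplus \CC(K\otimes 1)$ yields an integrable module with finite-dimensional weight spaces on which the center acts trivially, and for such modules the set of weights is finite modulo $\ZZ\delta$. The key geometric input is that a single affine Weyl orbit already has finite classical projection -- it equals an $\overset{\circ}{W}$-orbit, since at level zero the translation part of $W$ only shifts the $\delta$-coordinate -- and the finite-dimensionality of weight spaces forces only finitely many orbits to occur. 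I would cite \cite{CHA} for this classification rather than reprove it.

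Granting boundedness, put $r_0=\max h(P(V))$; I then claim $M=V[r_0]$. The inclusion $V[r_0]\subseteq M$ is immediate. For the reverse, take a weight vector $v\in M$ of weight $\mu$ with $h(\mu)=r<r_0$. By irreducibility $V=U(\LL(A))v=U(\LL(A)_-)\,U(\LL(A)_0)\,U(\LL(A)_+)v=U(\LL(A)_-)v$, because $\LL(A)_+v=0$ and $U(\LL(A)_0)v\subseteq V[r]$. Since $\LL(A)_-$ only lowers degree, this gives $V\subseteq\bigoplus_{s\le r}V[s]$, contradicting $V[r_0]\neq 0$. Hence every vector of $M$ has degree $r_0$, so $M=V[r_0]$.

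Finally, irreducibility of $M$ over $\LL(A)_0$ follows from the same grading argument. Suppose $0\neq N\subsetneq M$ is an $\LL(A)_0$-submodule. Then $U(\LL(A))N=U(\LL(A)_-)N$ exactly as above, and because $\LL(A)_-$ strictly lowers degree while $N\subseteq V[r_0]$, the top-degree component of $U(\LL(A)_-)N$ is precisely $N$. By irreducibility of $V$ we get $U(\LL(A)_-)N=V$, so its degree-$r_0$ part equals $V[r_0]=M$; but that part is $N$, whence $N=M$, a contradiction. Therefore $M$ is a nonzero irreducible $\LL(A)_0$-module. The only genuinely hard step is the boundedness of the classical support $\bar{P}(V)$; once that is in hand, the rest is the standard principle that the top graded piece of a graded irreducible module is an irreducible module over the degree-zero subalgebra.
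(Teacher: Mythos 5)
You have the right skeleton, and the second half of your argument --- the $h$-grading, the identification $M=V[r_0]$, and the top-degree argument for irreducibility of $M$ over $\LL(A)_0$ --- is correct; it is a careful expansion of what the paper compresses into ``irreducibility follows by PBW theorem.'' The genuine gap is in the step you yourself call the main obstacle: non-vanishing of $M$, which you reduce to finiteness of the classical support $\bar{P}(V)$ and then justify by restricting $V$ to $\fma\otimes\CC[t,t^{-1}]\otimes 1\oplus\CC (K\otimes 1)$ and citing \cite{CHA} for ``integrable, finite-dimensional weight spaces, level zero $\Rightarrow$ finitely many weights modulo $\ZZ\delta$.'' That restricted module is \emph{not} irreducible, Chari's result is a classification of \emph{irreducible} modules, and the statement you invoke is false without irreducibility. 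Concretely, for $\fma=\mathfrak{sl}_2$ let $V_k$ be the $(k+1)$-dimensional irreducible $\mathfrak{sl}_2$-module and form $\bigoplus_{k\geq 1} V_k\otimes\CC[t,t^{-1}]$, where on the $k$-th summand $(x\otimes t^n)(v\otimes t^m)=xv\otimes t^{m+n}$, $K$ acts by zero, and $d_0$ acts on $v\otimes t^m$ by $m+s_k$ with scalars $s_k$ pairwise incongruent modulo $\ZZ$. Each summand is integrable of level zero; since the $s_k$ lie in distinct cosets of $\ZZ$, every weight space of the direct sum is at most one-dimensional; yet the classical weights are unbounded. The same example kills your fallback claim that ``finite-dimensionality of weight spaces forces only finitely many orbits to occur'': distinct $W$-orbits are pairwise disjoint subsets of $P(V)$, so the dimension of any single weight space puts no constraint at all on the number of orbits --- here infinitely many occur.

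Your logical order is also inverted relative to what can be cited. The available result --- Theorem 2.4(ii) of \cite{CHA}, which is exactly what the paper quotes, applied to $V$ itself --- directly gives a weight vector annihilated by all $\fma_\alpha\otimes t^n$, $\alpha\in\fpr$, $n\in\ZZ$, i.e.\ precisely $M\neq 0$. Note this is strictly weaker than finiteness of $\bar{P}(V)$ (it holds in the counterexample above, where $\bar{P}(V)$ is infinite), and this weaker statement is the genuinely nontrivial input. Finiteness of the classical support of $P(V)$ is in the paper a \emph{consequence}, derived only after $M\neq 0$ from $V=U(L[{\overset{\circ}{\mathfrak{n}}}_{-}](A))M$ together with Lemmas A and B of Section 13.2 of \cite{HUM}, an argument that uses irreducibility of $V$ as an $\LL(A)$-module and not merely properties of the restriction. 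To repair your proof, either quote Chari's Theorem 2.4(ii) for the existence of the maximal weight (as the paper does) and then run your grading argument, or supply an actual proof that $h$ is bounded above on $P(V)$ which exploits $\LL(A)$-irreducibility; the justification you gave does neither.
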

\begin{proof}
Then by result of \cite{CHA} (Theorem 2.4 (ii)), $M$ is nonzero and $\LL(A)_0$ irreducibility follows by PBW theorem. 
\end{proof}
 Now consider the weight space decomposition $\oplus_{\beta \in \bar{\mathfrak{h}}}{M_{\beta}}$.   First note that any $\eta \in P(V)$ can be uniquely written as $ \bar{\eta} +  \eta(d_0) \delta + c \,\omega$, since $\eta(K) = 0$ and $\eta(C) = c$. Now as $\fmh$ lies in center of $\mathfrak{L}{(A)}_0$, it acts as  scalars on $M$. Let $h \in \fmh$ then $h.w = \bar{\lambda}(h)w, \,\,  \forall \,\, w \in M, \mathrm{and} \,\, h \in \fmh$.  Note that $d_0$ leaves each weight space of $M$ invariant, so let $ 0 \neq v \in M_{\beta}$ be such that $ d_0(v) = \xi v, \xi \in \CC$. Now irreducibility of 
$M$ as $\LL(A)_0$ module implies that all the weights of $M$ are contained in the set $\{ \bar{\lambda}  + (\xi + n)\delta + c \,\omega : n \in \ZZ \}$ (see Lemma 2.6 of \cite {R}). It follows from Corollary 3.6 of \cite{KAC} that $\bar{\lambda} \in \overset{\circ}{P}_+$.
 Let $\theta$ be the highest weight of 
$\fma$ and $\theta^{\vee} $ be the corresponding co-root.  Let $L = \Phi (\ZZ[\overset{\circ}{W}(\theta^{\vee})]) $ where $\Phi : \fmh \rightarrow \fmh^*$ is an isomorphism given by non-degenerate bilinear form $\langle \cdot, \cdot \rangle$. Recall that $W = \overset{\circ}{W}  \rtimes T_{L}$, where $\overset{\circ}{W}$ is the Weyl group of $\fma$ and $T_{L}$ is abelian group with elements
$t_{\alpha} \in \mathrm{GL}{(\bar{\mathfrak{h})}^*}$, $\alpha \in L$ defined by $t_{\alpha}(y) = y +   y(K) \alpha - (\langle y, \alpha \rangle + \frac{1}{2}|\alpha|^2  y(K)) \delta$ (see \cite{KAC}). As $K$ acts trivially we have
$t_{\alpha}(y) = y - \langle y, \alpha \rangle \delta ,$ and so we have $t_{\alpha + \beta} = t_{\alpha} t_{\beta}$. Let $s_{\lambda} := \mathrm{min} \{\lambda(h) : \lambda(h) >0, h \in \ZZ[\overset{\circ}{W}(\theta^{\vee})] \}$. We have the following from \cite{CG}(Lemma 3.1).
\begin{ppsn} \label{vcp}
Let $r \in \ZZ$ be such that  $M_{\bar{\lambda }+ (\xi + r ) \delta + c \,\omega} \neq 0$. Then there exists $w \in W$ such that $w(\bar{\lambda} +( \xi + r) \delta + c \, \omega) = \bar{\lambda} + (\xi + {l} )\delta + c \, \omega,  \, 0 \leq {l} < s_{\lambda}$.
\end{ppsn}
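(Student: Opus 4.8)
The plan is to carry out the required reduction entirely inside the translation subgroup $T_L \subseteq W$, exploiting that when $K$ acts trivially these translations fix both the $\fmh^*$-component and the $\omega$-component of a weight and only shift the coefficient of $\delta$. Since $\bar{\lambda}\in\overset{\circ}{P}_+$ is the component we must preserve, it is crucial that we use \emph{only} $T_L$ and not the finite reflections $\overset{\circ}{W}$, which would move $\bar{\lambda}$.

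First I would compute the action of a translation on the weight in question. Set $\mu = \bar{\lambda} + (\xi + r)\delta + c\,\omega$ and take $\alpha = \Phi(h)\in L$ with $h \in \ZZ[\overset{\circ}{W}(\theta^{\vee})]$. Because $K$ acts trivially we have $\mu(K)=0$, so the formula recorded before the statement gives $t_{\alpha}(\mu) = \mu - \langle \mu, \alpha\rangle\,\delta$. Using the orthogonality relations of the form on $\bar{\hh}^*$, namely $\langle\delta,\fmh^*\rangle = \langle\omega,\fmh^*\rangle = 0$, only the $\fmh^*$-part of $\mu$ pairs nontrivially with $\alpha$, and since $\Phi$ is the isomorphism induced by the form one gets $\langle\mu,\alpha\rangle = \langle\bar{\lambda},\Phi(h)\rangle = \bar{\lambda}(h)$. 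Hence
\[
t_{\alpha}(\mu) = \bar{\lambda} + \big(\xi + r - \bar{\lambda}(h)\big)\delta + c\,\omega,
\]
so $\bar{\lambda}$ and $c\,\omega$ are untouched and the coefficient of $\delta$ is shifted by $-\bar{\lambda}(h)$.

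Next I would identify the set of achievable shifts. As $\bar{\lambda}\in\overset{\circ}{P}_+$ is dominant integral, it takes integer values on the coroot lattice, in particular on $\ZZ[\overset{\circ}{W}(\theta^{\vee})]$; thus $h \mapsto \bar{\lambda}(h)$ is a group homomorphism $\ZZ[\overset{\circ}{W}(\theta^{\vee})] \to \ZZ$ whose image is a subgroup of $\ZZ$. Since the orbit $\overset{\circ}{W}(\theta^{\vee})$ spans $\fmh$, this image is nonzero (when $\bar{\lambda}\neq 0$, which is exactly the case in which $s_{\lambda}$ is defined), so by the very definition of $s_{\lambda}$ as its least positive element the image equals $s_{\lambda}\ZZ$. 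Given $r\in\ZZ$, Euclidean division yields $r = q\,s_{\lambda} + l$ with $0 \leq l < s_{\lambda}$; then $r - l = q\,s_{\lambda}\in s_{\lambda}\ZZ$, so I may pick $h$ with $\bar{\lambda}(h)=r-l$, and $w:=t_{\Phi(h)}\in T_L\subseteq W$ satisfies $w(\mu) = \bar{\lambda} + (\xi + l)\delta + c\,\omega$, as required.

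The genuinely delicate point is the identification of the image of $\bar{\lambda}$ on $\ZZ[\overset{\circ}{W}(\theta^{\vee})]$ with $s_{\lambda}\ZZ$: one must check integrality of $\bar{\lambda}$ on this lattice and that it does not vanish identically, the latter following from $\bar{\lambda}\neq 0$ together with the spanning property of the $\overset{\circ}{W}$-orbit of $\theta^{\vee}$. Everything else is the explicit translation formula and a division with remainder, so I expect no serious obstacle beyond bookkeeping of the bilinear form; note that the hypothesis $M_{\bar{\lambda}+(\xi+r)\delta+c\,\omega}\neq 0$ is only needed to guarantee $\bar{\lambda}$ is the relevant dominant weight and is not otherwise used in the reduction.
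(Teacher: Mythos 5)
Your argument is correct and complete. Note, however, that the paper gives no internal proof of this proposition at all: it is quoted directly as Lemma 3.1 of \cite{CG}, so the only "proof" in the paper is a citation. What you have supplied is a self-contained derivation, and it is the standard argument one would expect behind the cited lemma: work entirely in the translation subgroup $T_{L}$ (so that $\bar{\lambda}$ and $c\,\omega$ are untouched, which conjugating by the finite Weyl group would not guarantee), use $\mu(K)=0$ to reduce $t_{\Phi(h)}$ to the shift $\mu \mapsto \mu - \bar{\lambda}(h)\delta$ (your computation $\langle \mu,\Phi(h)\rangle = \bar{\lambda}(h)$ is right, since $\delta$ and $\omega$ are orthogonal to $\fmh^{*}$ and the form identifies $\langle\bar{\lambda},\Phi(h)\rangle$ with $\bar{\lambda}(h)$), identify the set of achievable shifts $\{\bar{\lambda}(h) : h \in \ZZ[\overset{\circ}{W}(\theta^{\vee})]\}$ as the subgroup $s_{\lambda}\ZZ$ of $\ZZ$ --- integrality because $\ZZ[\overset{\circ}{W}(\theta^{\vee})]$ sits inside the coroot lattice and $\bar{\lambda}\in\overset{\circ}{P}_{+}$, nontriviality because the orbit $\overset{\circ}{W}(\theta^{\vee})$ spans $\fmh$ and $\bar{\lambda}\neq 0$ --- and finish by division with remainder. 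Your two side remarks are also accurate: the hypothesis $M_{\bar{\lambda}+(\xi+r)\delta+c\,\omega}\neq 0$ plays no role in the reduction itself, since the statement is purely about the $W$-action on $\bar{\hh}^{*}$, and the degenerate case $\bar{\lambda}=0$, where $s_{\lambda}$ is undefined, is exactly the case the paper disposes of separately in a later remark. What your version buys is that this step of Section 3 becomes independent of \cite{CG}; what the paper's citation buys is only brevity.
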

Using above Proposition we have the following:
\begin{crlre}
There are only finitely many $\gg(A)$-submodules of $V$. In particular there exists finitely many weight spaces $M_{\beta_i}$, $1 \leq i \leq l$ such that
 $$M = \sum_{i = 1}^{l}{U((L[\fmh] \oplus \CC K)(A)) M_{\beta_i}} \,\,\,(\text{sum need not be direct}). $$
\end{crlre}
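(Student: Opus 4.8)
The plan is to transfer the whole question onto the highest weight space $M$ and the affine-Cartan loop subalgebra $\mathcal{H}(A):=(L[\fmh]\oplus\CC K)(A)=\gg(A)\cap\LL(A)_0$, where $\gg(A)=(L[\fma]\oplus\CC K)(A)$ is the affine loop ideal with triangular decomposition $\gg(A)=L[\nm](A)\oplus\mathcal{H}(A)\oplus L[\np](A)$. First I would record three structural facts. (i) $\gg(A)$ is an ideal of $\LL(A)$, and $\mathcal{H}(A)$ preserves $M$: for $h\otimes t^k\otimes a\in\mathcal{H}(A)$ and $v\in M$ the bracket $[x_\alpha\otimes t^n,\,h\otimes t^k\otimes a]$ is a multiple of $x_\alpha\otimes t^{n+k}\otimes a$ (the $K$-term vanishes since $\langle\fma_\alpha,\fmh\rangle=0$), which kills $v$. (ii) Since $V$ is $\LL(A)$-irreducible and $\LL(A)_+M=0$, one has $V=U(\gg(A))M$. (iii) Every $\gg(A)$-submodule $W$ is generated by its top, $W=U(L[\nm](A))(W\cap M)$, and by the PBW theorem and the weight grading $U(\gg(A))N\cap M=U(\mathcal{H}(A))N$ for any graded subspace $N\subseteq M$. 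Thus $W\mapsto W\cap M$ is an inclusion-preserving bijection between $\gg(A)$-submodules of $V$ and weight-graded $\mathcal{H}(A)$-submodules of $M$, and it suffices to analyse $M$ as an $\mathcal{H}(A)$-module.

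Next I would establish that $M$ is uniformly bounded and periodic. The degree-zero part $(\fmh\oplus\CC K)\otimes A$ of $\mathcal{H}(A)$ is central in $\LL(A)_0$, so by Schur's lemma (in Dixmier's form for countable-dimensional modules) it acts on the irreducible $M$ by a scalar character; the only operators changing the $\delta$-degree are the $\fmh\otimes t^k\otimes A$ with $k\neq0$. All weights of $M$ lie on the line $\{\bar\lambda+(\xi+n)\delta+c\,\omega:n\in\ZZ\}$, and by Proposition \ref{vcp} each is $W$-conjugate into the finite window $0\le l<s_\lambda$; combined with $\dim M_\mu\le\dim V_\mu=\dim V_{w\mu}$ from Proposition \ref{wp}, this yields the uniform bound $\dim M_\mu\le d:=\max_{0\le l<s_\lambda}\dim V_{\bar\lambda+(\xi+l)\delta+c\,\omega}$. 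Finally, because $K$ acts as $0$, the translations act by $t_\alpha(y)=y-\langle y,\alpha\rangle\delta$, so a weight of $M$ has its degree shifted by $-\langle\bar\lambda,\alpha\rangle$; as $\alpha$ runs over $L=\Phi(\ZZ[\overset{\circ}{W}(\theta^{\vee})])$ these shifts fill out $s_\lambda\ZZ$, whence by the $W$-invariance of $P(V)$ the support and multiplicities along the line are invariant under $n\mapsto n+s_\lambda$.

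It remains to pass from uniform boundedness and $s_\lambda$-periodicity to finiteness, and this is the main obstacle: boundedness alone does not force finite generation of an arbitrary graded module, so the argument must genuinely use both the periodicity coming from $T_L$ and the $\LL(A)_0$-irreducibility of $M$. I would show that $M$ is generated over $\mathcal{H}(A)$ by the weight spaces $M_{\beta_i}$ whose degrees lie in one fundamental window (there are finitely many, each of dimension $\le d$ over fewer than $s_\lambda$ degrees). Concretely, form the increasing chain of graded $\mathcal{H}(A)$-submodules $M^{[k]}=\sum_{|n|\le k}U(\mathcal{H}(A))\,M_{(\text{degree }n)}$, whose union is $M$, and prove it stabilizes after one full period: once a block of $s_\lambda$ consecutive degrees is included, the translation symmetry together with irreducibility forces the maps $\fmh\otimes t^{\pm1}\otimes A$ to surject onto all further graded pieces, so no new generators are needed. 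This gives $M=\sum_{i=1}^{l}U((L[\fmh]\oplus\CC K)(A))\,M_{\beta_i}$; and, via the bijection of the first paragraph, the same stabilization bounds the graded $\mathcal{H}(A)$-submodule lattice of $M$ and hence shows that $V$ has only finitely many $\gg(A)$-submodules. I expect the delicate point to be justifying the surjectivity of these connecting maps beyond one period purely from the periodic multiplicity pattern and the irreducibility, which is exactly the place where the integrability of $V$, funnelled through Propositions \ref{wp} and \ref{vcp}, does the essential work.
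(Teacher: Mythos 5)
Your first two paragraphs are sound and essentially parallel the paper (the reduction to $M$, and the dimension bounds via Propositions \ref{wp} and \ref{vcp}), but the proof fails at precisely the step you flag as ``delicate'': the passage from bounded, $s_\lambda$-periodic multiplicities to generation of $M$ over $(L[\fmh]\oplus \CC K)(A)$ by one window of weight spaces is never carried out, and the mechanism you propose cannot supply it. You allow yourself two inputs there: (a) periodicity of the numbers $\dim M_{\bar\lambda+(\xi+n)\delta+c\,\omega}$, and (b) irreducibility of $M$ as an $\LL(A)_0$-module. Input (a) is purely numerical and says nothing about surjectivity of the maps $\fmh\otimes t^{\pm k}\otimes A\colon M_n\to M_{n\pm k}$. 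Input (b) is irreducibility over an algebra that contains $\mathrm{Vir}(A)$, and the degree-shifting operators $d_k(a)$ are exactly what the corollary must do without. Together they do not imply finite generation over $(L[\fmh]\oplus\CC K)(A)$: an $\LL(A)_0$-module on which $L[\fmh](A)$ acts by scalars in degree zero and by zero in nonzero degrees, while $\mathrm{Vir}(A)$ alone moves the grading with one-dimensional graded pieces, is $\LL(A)_0$-irreducible, uniformly bounded, and periodic for every period, yet it is an infinite direct sum of one-dimensional $(L[\fmh]\oplus\CC K)(A)$-modules. (This is not a fanciful configuration --- it is exactly what happens in the excluded case $\bar\lambda=0$, treated separately in the remark after Proposition \ref{ps1}.) So no argument from ``periodic multiplicity pattern plus $\LL(A)_0$-irreducibility'' can close the gap; you need a device that transports \emph{vectors}, not dimensions, across degrees using only $\gg(A)$.

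That device is the actual content of the paper's short proof, and it is what Proposition \ref{vcp} is for. On an integrable module, each real reflection $r_\beta$, $\beta\in\Delta^{\mathrm{re}}$, and hence each translation $t_\alpha\in T_L$, is implemented by an invertible operator of the form $\exp(x_\beta)\exp(-x_{-\beta})\exp(x_\beta)$, where $x_{\pm\beta}$ are real root vectors lying in $\gg\otimes 1\subseteq\gg(A)$ and acting locally nilpotently; this operator maps $V_\mu$ isomorphically onto $V_{w\mu}$, and its effect on any fixed vector is given by an element of $U(\gg(A))$. Since on the top line one has $V_{\bar\lambda+(\xi+n)\delta+c\,\omega}=M_{\bar\lambda+(\xi+n)\delta+c\,\omega}$ (no $\fmh^*$-lowering has occurred), Proposition \ref{vcp} transports every weight space of $M$ into $U(\gg(A))\tilde M$, where $\tilde M=\bigoplus_{l=0}^{s_\lambda-1}M_{\beta_l}$. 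Hence $V=U(\gg(A))M=\sum_i U(\gg(A))M_{\beta_i}$; the same transport, combined with the fact that a nonzero $\bar{\hh}$-stable $\gg(A)$-submodule meets $M$ (Chari's theorem), shows such a submodule is generated by its vectors in the finite-dimensional space $\tilde M$, which is the finiteness assertion. The statement about $M$ then follows by the PBW/grading observation you already made: write $U(\gg(A))=U(L[\nm](A))\,U((L[\fmh]\oplus\CC K)(A))\,U(L[\np](A))$, use $L[\np](A)M_{\beta_i}=0$, and extract the component whose $\fmh^*$-part equals $\bar\lambda$ to get $M=\sum_i U((L[\fmh]\oplus\CC K)(A))M_{\beta_i}$. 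In short: your reduction and bounds survive, but your proposal invokes the Weyl group only to count dimensions, whereas the corollary is proved by letting the Weyl group act as operators built from $\gg(A)$ on the integrable module; without that, the statement is not proved.
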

\begin{proof}
Define $\tilde{M} =\displaystyle{ \bigoplus_{l = 0}^{s_{\lambda} -1}{M_{\bar{\lambda} + (\xi + {l} )\delta + c \, \omega}}}$. Notice that any $\gg(A)$ submodule of $V$  is generated by the vectors of $\tilde{M}$. Hence there only finitely many 
$\gg(A)$ submodules of $V$ and consequently we have $V = \displaystyle{\sum_{i = 0}^{s_{\lambda} -1}{U(\gg(A))M_{\beta_i}}}$(sum need not be direct), where $\beta_i = \bar{\lambda} + (\xi + {i} )\delta + c \, \omega$. From this we deduce that 
$M = \displaystyle{\sum_{i = 0}^{s_{\lambda} -1}{U((L[\fmh] \oplus \CC K)(A)) M_{\beta_i}}} $.
\end{proof}

\begin{ppsn}
There exist a positive integer $p$ such that $P(V) \subseteq \{ \bar{\gamma_i }+ (\xi + n)\delta + c \,\omega: 1 \leq i \leq p, n \in \ZZ \} $.
\end{ppsn}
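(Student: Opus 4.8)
The plan is to separate a weight $\gamma\in P(V)$ into its three ``coordinates'': its $\omega$-component, which will always equal $c$; its $\delta$-component, which will always lie in $\xi+\ZZ$; and its horizontal projection $\bar\gamma\in\fmh^*$, which is the only part that requires work. The first two are immediate, so the entire content is the finiteness of the set of projections $\overline{P(V)}:=\{\bar\gamma:\gamma\in P(V)\}$, after which I take $\bar\gamma_1,\dots,\bar\gamma_p$ to be its elements.

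First I would record that $V=U(\LL(A)_{-})M$. Picking any $0\neq v\in M$, irreducibility gives $V=U(\LL(A))v$; since $\LL(A)_{+}v=0$ and $U(\LL(A)_{0})v=M$ by the $\LL(A)_{0}$-irreducibility of $M$, the PBW theorem yields $V=U(\LL(A)_{-})M$. Every weight of $M$ has horizontal projection equal to $\bar{\lambda}$, because $\fmh$ is central in $\LL(A)_{0}$ and hence acts by the scalar $\bar{\lambda}$ on the irreducible module $M$. Since $\LL(A)_{-}=L[\nm](A)$ is spanned by elements that lower the $\fmh^*$-weight by a positive root $\alpha\in\fpr$, every $\gamma\in P(V)$ satisfies $\bar\gamma\in\bar{\lambda}-\overset{\circ}{Q}_{+}$; moreover $C$ acts by $c$ and every weight shift is an integer multiple of $\delta$, which gives the $\omega$- and $\delta$-statements and confirms $\overline{P(V)}$ is bounded above by $\bar{\lambda}$.

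Next I would show that $\overline{P(V)}$ is stable under the finite Weyl group $\overset{\circ}{W}$. By Proposition \ref{wp}(1) the set $P(V)$ is $W$-invariant, and $\overset{\circ}{W}\subseteq W$. For a finite reflection $r_\alpha$ with $\alpha\in\fpr$ one has $r_\alpha(\gamma)=\gamma-\gamma(\alpha^{\vee})\alpha$; because $K$ acts trivially we have $\gamma(\alpha^{\vee})=\bar\gamma(\alpha^{\vee})$, and $\alpha$ has vanishing $\delta$- and $\omega$-components, so the horizontal projection of $r_\alpha(\gamma)$ is exactly $r_\alpha(\bar\gamma)$. Thus the projection $\bar\hh^*\to\fmh^*$ intertwines the $\overset{\circ}{W}$-actions, and $\overline{P(V)}$ is $\overset{\circ}{W}$-invariant. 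I expect this equivariance to be the one genuinely delicate point: it is precisely where the hypothesis that the affine center acts trivially (level zero) is indispensable, since at nonzero level the real reflections mix the $\fmh^*$- and $\delta$-directions and the projection would fail to be $\overset{\circ}{W}$-invariant.

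Finally I would invoke the standard finiteness fact: a $\overset{\circ}{W}$-invariant subset of $\bar{\lambda}-\overset{\circ}{Q}_{+}$ with $\bar{\lambda}\in\overset{\circ}{P}_{+}$ (established earlier in the excerpt) is finite. Each $\overset{\circ}{W}$-orbit has a dominant representative $\mu\leq_{0}\bar{\lambda}$; writing $\bar{\lambda}-\mu$ as a nonnegative combination of simple roots and using that $\bar{\lambda}+\mu$ is dominant gives $\langle\bar{\lambda}-\mu,\bar{\lambda}+\mu\rangle\geq 0$, hence $\langle\mu,\mu\rangle\leq\langle\bar{\lambda},\bar{\lambda}\rangle$. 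Since the form is positive definite on the relevant lattice, only finitely many dominant $\mu$ satisfy this bound, and as $\overset{\circ}{W}$ is finite each orbit is finite; therefore $\overline{P(V)}$ is finite. Combining this with the $\delta$- and $\omega$-statements from the first step yields $P(V)\subseteq\{\bar\gamma_i+(\xi+n)\delta+c\,\omega:1\leq i\leq p,\ n\in\ZZ\}$, as desired.
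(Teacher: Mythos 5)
Your proof is correct and follows essentially the same route as the paper: project the weights to $\fmh^*$, bound the projections by $\bar{\lambda}-\overset{\circ}{Q}_{+}$, use $\overset{\circ}{W}$-invariance of $P(V)$ to pass to dominant representatives $\leq_{0}\bar{\lambda}$, and conclude by finiteness of the set of such dominant weights; the paper simply cites Lemmas A and B of Section 13.2 of Humphreys at the two places where you reprove these facts inline. One aside in your write-up is inaccurate, though it does not affect validity: the $\overset{\circ}{W}$-equivariance of the projection is not where level zero is indispensable, since for $\alpha\in\fpr$ the coroot $\alpha^{\vee}$ lies in $\fmh$, so $\gamma(\alpha^{\vee})=\bar{\gamma}(\alpha^{\vee})$ and the projection intertwines the finite reflections at any level --- what the vanishing of the level actually buys is that weights have no $\Lambda_0$-component (so they admit the stated form $\bar{\gamma}+(\xi+n)\delta+c\,\omega$) and that the translation part of the affine Weyl group only shifts the $\delta$-direction.
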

\begin{proof}
We have seen that set of all weight of $M$ are contained in $\{ \bar{\lambda}  + (\xi + n)\delta + c \,\omega : n \in \ZZ \}$. As $V = U(L[\overset{\circ}{\mathfrak{n}}_{-}] (A))M$, we note that weights of $V$ are contained in the set
$\{ \bar{\lambda} - \eta + (\xi +n) \delta + c \, \omega: \eta \in \overset{\circ}{Q}_{+}, n \in \ZZ\}$.  Let $\bar{\lambda} - \eta_i + (\xi +m) \delta + c\, \omega $ be any weight of $V$, then by Proposition \ref{wp} (i) and Lemma A of section 13.2 of  \cite{HUM}, we get that the weight $\bar{\lambda} - \eta_i + (\xi +m) \delta + c\, \omega$ is $\overset{\circ}{W}$-conjugate to a weight $\bar{\beta_i} + (\xi +m) \delta + c\, \omega$, where $\bar{\beta_i} \in \overset{\circ}{P}_{+}$ and $\bar{\beta_i} \leq_{0} \bar{\lambda}$. But by Lemma B of section 13.2 of \cite{HUM}, there are finitely many dominant weights $\leq_{0}\bar{\lambda}$, say $\{\bar{\gamma_1} = \bar{\lambda}, \ldots, \bar{\gamma_d}\}$. Finally, listing all $\overset{\circ}{W}$-conjugates, $\overset{\circ}{W}\{\bar{\gamma_1}, \ldots, \bar{\gamma_d}\} =  \{ \bar{\gamma_1}, \ldots, \bar{\gamma_p} \}$, we get the desired result.
\end{proof}
 
\begin{ppsn} \label{ps1}
Let $V$ be an irreducible integrable $\LL(A)$-module with affine center acting trivially on it. Then its weight spaces are uniformly bounded. Moreover considered as $d_0$ weight module, the weight spaces of $V$ are also uniformly bounded.
\end{ppsn}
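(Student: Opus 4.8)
The goal is a uniform bound $N$ with $\dim V_\gamma \le N$ for all $\gamma \in P(V)$, and I would begin by observing that the two assertions are equivalent once one invokes the previous Proposition, which confines $P(V)$ to finitely many strips $\{\bar{\gamma_i} + (\xi+n)\delta + c\,\omega : n \in \ZZ\}$, $1 \le i \le p$. Indeed, a bound $N$ on the $\bar{\hh}$-weight spaces forces every $d_0$-eigenspace $\bigoplus_{i} V_{\bar{\gamma_i}+(\xi+n)\delta+c\,\omega}$ to have dimension $\le pN$, while conversely each $\bar{\hh}$-weight space sits inside a single $d_0$-eigenspace; so I would prove the $\bar{\hh}$-version. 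By Proposition \ref{wp} dimensions are constant along $W$-orbits, and applying the finite $\overset{\circ}{W}$ we may assume each $\bar{\gamma_i}$ is dominant. It therefore suffices to bound each of these finitely many dominant strips uniformly in $n$.

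For the top strip $\bar{\gamma_i} = \bar{\lambda}$ I would note that, since $L[\np](A)$ strictly raises the $\fmh$-component, no lowering can return the $\fmh$-part to $\bar{\lambda}$; hence $V_{\bar{\lambda}+(\xi+n)\delta+c\,\omega} = M_{\bar{\lambda}+(\xi+n)\delta+c\,\omega}$. Proposition \ref{vcp} then moves every such weight by an element of $W$ to one with offset in $\{0,1,\dots,s_\lambda-1\}$, bounding the top strip by $\max_{0\le l<s_\lambda}\dim M_{\bar{\lambda}+(\xi+l)\delta+c\,\omega}$. For a dominant strip with $\bar{\gamma_i}\neq 0$ I would use the translation subgroup $T_L\subseteq W$: since $K$ acts as $0$, $t_\alpha$ fixes the $\fmh$- and $\omega$-parts and sends the offset $n$ to $n-\langle\bar{\gamma_i},\alpha\rangle$. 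As the form is nondegenerate and $L$ spans $\fmh^*$, the integers $\{\langle\bar{\gamma_i},\alpha\rangle:\alpha\in L\}$ form a nonzero subgroup $g_i\ZZ$, so $n\mapsto\dim V_{\bar{\gamma_i}+(\xi+n)\delta+c\,\omega}$ is $g_i$-periodic and hence bounded.

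This leaves the zero strip $\bar{\gamma}=0$, which is the heart of the matter, for here every element of $W$ fixes the weights pointwise (both $\bar{\cdot}$ and $K$ pair to $0$ with all coroots) and no symmetry is available. The structural input I would use is the preceding Corollary, which writes $V=\sum_{i=0}^{s_\lambda-1}U(\gg(A))M_{\beta_i}$ as a finite sum of $\gg(A)$-submodules, each cyclically generated by a finite-dimensional weight space $M_{\beta_i}$. Restricting to the central-charge-zero loop-affine algebra $\gg(A)=(\fma\otimes\CC[t,t^{-1}]\oplus\CC K\oplus\CC d_0)\otimes A$, each summand is an integrable $\gg(A)$-module with finite-dimensional weight spaces generated by a finite-dimensional subspace, and summing the bounds for these summands would control $\dim V_\gamma$ on every strip, the zero strip included. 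The concrete route to bounding a single such summand is to inject the zero weight space $V_{(\xi+n)\delta+c\,\omega}$ into a finite sum of nonzero-strip weight spaces via raising operators $x_\alpha\otimes t^m\otimes a\in L[\np](A)$, using that any nonzero vector annihilated by all of $L[\np](A)$ lies in $M$ and so has $\fmh$-part $\bar{\lambda}\neq 0$, a contradiction; equivalently, for a real root $\beta=\alpha+m\delta$ the finite-dimensional $\mathfrak{sl}_2(\beta)$-string $\bigoplus_k V_{\gamma+k\beta}$ gives $\dim V_\gamma=\dim V_{\gamma+\beta}+\dim(V_\gamma\cap\ker e_\beta\cap\ker f_\beta)$.

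The delicate point, and where I expect the real work to lie, is to make this finite set of raising operators, equivalently the control of the ``trivial $\mathfrak{sl}_2$-part'', uniform in $n$; this is exactly the uniform-boundedness phenomenon for integrable loop-affine modules of zero central charge. I would either quote it from the loop-algebra literature (the same circle of ideas underlying Proposition \ref{vcp}) or establish it via the $\mathfrak{sl}_2$-string analysis above combined with the finiteness of the set of occurring $\bar{\cdot}$-values, which caps the length of each string. Once every strip is bounded, the maximum of the finitely many strip bounds yields the uniform bound $N$ on the $\bar{\hh}$-weight spaces, and summing over the at most $p$ strips meeting a fixed $d_0$-eigenvalue gives the corresponding bound for the $d_0$-grading, which completes Proposition \ref{ps1}.
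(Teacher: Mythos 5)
You have reproduced the paper's actual argument for everything except the strip $\bar{\gamma_i}=0$: your $T_L$-periodicity computation for the strips with $\bar{\gamma_i}\neq 0$ is precisely what the paper means by ``a repetitive application of Proposition \ref{vcp}'', your identification $V_{\bar{\lambda}+(\xi+r)\delta+c\,\omega}=M_{\bar{\lambda}+(\xi+r)\delta+c\,\omega}$ is correct, and your factor-$p$ passage from the $\bar{\hh}$-bound to the $d_0$-bound is the paper's closing step verbatim. You are also right --- and more scrupulous than the paper, whose one-line proof is silent on this point --- that the zero strip is invisible to any Weyl-group argument: when $\gamma(K)=0$ and $\bar{\gamma}=0$, every $w\in W$ fixes $\gamma$, so Proposition \ref{vcp} and $W$-invariance of dimensions say nothing there.

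However, your handling of that zero strip is a genuine gap, not a deferred routine detail. The appeal to the Corollary is circular (``a cyclic integrable $\gg(A)$-module generated by a finite-dimensional space has bounded weight spaces'' is essentially the statement being proved); the single-root identity $\dim V_\gamma=\dim V_{\gamma+\beta}+\dim(V_\gamma\cap\ker e_\beta\cap\ker f_\beta)$ leaves the trivial $\mathfrak{sl}_2$-isotypic part uncontrolled no matter how you cap string lengths; and ``quote it from the literature'' is not an argument. The missing idea is to use the $n+1$ affine Chevalley raising operators simultaneously, which also dissolves your worry about uniformity in the offset, since one and the same finite set of operators works for every weight of the strip. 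Concretely, for $\gamma=(\xi+r)\delta+c\,\omega$ put $e_i=x_{\alpha_i}\otimes 1$ for $1\le i\le n$ and $e_0=x_{-\theta}\otimes t\otimes 1$, where $\theta$ is the highest root of $\fma$. If $v\in V_\gamma$ is killed by all $e_i$, then $v$ is killed by $\n_+\otimes 1$, because the $e_i$ generate $\n_+$. Moreover $\gamma(\alpha_i^\vee)=0$ for every $0\le i\le n$ (both $\bar{\gamma}$ and $\gamma(K)$ vanish), so integrability and $\mathfrak{sl}_2$-theory applied to each triple $(e_i,\alpha_i^\vee,f_i)$ force $f_iv=0$ as well; hence $v$ is annihilated by the subalgebra generated by all the $e_i,f_i$, namely $\gg'\otimes 1$, and in particular $v\in M$. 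Since every weight of $M$ has $\fmh$-part $\bar{\lambda}\neq 0$ while $\bar{\gamma}=0$, this gives $v=0$. Therefore $v\mapsto(e_0v,e_1v,\dots,e_nv)$ embeds $V_\gamma$ into $\bigoplus_{i=0}^{n}V_{\gamma+\alpha_i}$, whose summands lie in nonzero strips (with $\fmh$-parts $\alpha_1,\dots,\alpha_n,-\theta$) that you have already bounded uniformly, and the bound is independent of $r$. Inserting this argument makes your proof complete; it in fact fills in a case that the paper's own proof passes over.
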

\begin{proof}
Uniform bound for $\bar{\hh}$ weight spaces $M$ follows by Proposition \ref{vcp}. But as $P(V) \subseteq  \{ \bar{\gamma_i }+ (\xi + n)\delta + c \,\omega: 1 \leq i \leq p, n \in \ZZ \}$, a repetitive application of Proposition \ref{vcp} gives uniform bound for $\bar{\hh}$ weight spaces of $V$. To obtain an uniform bound on $d_0$ weight spaces let $0 \neq v \in V$ such that $d_0 v = \zeta v $. Then as  $[d_0, d_n(a)] = n d_n(a)$ and $[d_0, (x \otimes t^m)(b)] = m (x \otimes t^m)(b)$ for $a, b \in A, x \in \fma$, we note that every element of $U(\LL(A))$ decomposes into finitely many eigenvectors of $d_0$ with integer eigenvalues.   It follows that with respect to $d_0$, $V = U(\LL(A))v$ has the weight space decomposition $V = \bigoplus_{n \in \ZZ}{V_{\zeta +n}}$. 
Now as we have seen set of all $\bar{\hh}$ weights of $V$ are uniformly bounded by $N : = \mathrm{Max}\, \{\mathrm{dim} \, V_{\bar{\gamma_i} + (\zeta + r_i ^{j}) \delta + c \, \omega} : 1 \leq i  \leq p, 0 \leq  r_i  ^{j}< s_{\gamma_i}\}$, we deduce that dimensions of $d_0$ weight spaces is bounded by $p N$, i.e.,
$\mathrm{dim} V_{\zeta +n} \leq p N , \forall n \in \ZZ$.
 \end{proof}
 \noindent
 We write couple of remarks here:
 \begin{rmk}
 We won't need Proposition \ref{ps1} in full force for this classification problem. In fact what we really need is uniform bound on  $d_0$ weight spaces of $M$ which is clear.  Now from Theorem II(7) of \cite{CHPI}  it follows that Vir center $C$ acts trivially on $M$ and hence on $V$.
 \end{rmk}
 
 \begin{rmk}
 In the case when $\bar{\lambda} = 0$, by weight argument it follows that $L[\overset{\circ}{\mathfrak{n}}_{-}](A)M = 0 $ and using bracket relations we deduce that $L[\fmh](A)M = 0$. Hence we have $V = M$ is an irreducible $\mathrm{Vir}(A)$-module and appealing to Theorem 4.7 of \cite{SAV} 
 proves our Theorem \ref{thm}.
 \end{rmk}
 Let $\hh_1$ be a finite dimensional vector space with non-degenerate form $(,)$. Consider the standard Heisenberg algebra $L(\hh_1) \oplus \CC K_1$. We need the following result due to Futorny \cite{F} (Proposition 4.3 (i) and 4.5):
\begin{ppsn}\label{pf1}
Let $\bar{V}$ be $\ZZ$-graded $L(\hh_1) \oplus \CC K_1$ module with finite dimensional graded spaces. Suppose that $K_1$ acts as a non zero scalar on $\bar{V}$. Then $V$ contains a highest weight module or a lowest weight module. In particular dimensions of
graded spaces of $\bar{V}$ are not uniformly bounded.
\end{ppsn}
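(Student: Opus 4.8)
The plan is to deduce the statement from a single structural fact: that $\bar V$ contains a vector annihilated by all the positive modes or by all the negative modes of the Heisenberg algebra. Fix an orthonormal basis $h_1,\dots,h_\ell$ of $(\hh_1,(\,,\,))$ and set $h_i(n)=h_i\otimes t^n$, so that $[h_i(m),h_j(n)]=m\,\delta_{ij}\,\delta_{m+n,0}\,K_1$ and $K_1$ acts by the fixed scalar $z\neq 0$. Grade $\bar V=\bigoplus_{k\in\ZZ}\bar V_k$ by $t$-degree, so that $h_i(n)$ sends $\bar V_k$ into $\bar V_{k+n}$; the positive part $\mathfrak{a}_+=\mathrm{span}\{h_i(n):n\ge 1\}$ and the negative part $\mathfrak{a}_-=\mathrm{span}\{h_i(n):n\le -1\}$ are abelian, while the $h_i(0)$ and $K_1$ are central. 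Call $0\neq v$ a highest-weight (resp.\ lowest-weight) vector if $\mathfrak{a}_+v=0$ (resp.\ $\mathfrak{a}_-v=0$). Two instances are immediate: if $\bar V_k=0$ for $k\ll 0$, any nonzero vector of lowest degree is killed by $\mathfrak{a}_-$, and dually if $\bar V$ is bounded above we get a highest-weight vector. The genuinely hard case is a module unbounded in both directions, where no extremal degree is available.

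Once such a vector exists, both conclusions follow. Choosing $v$ to be a simultaneous eigenvector of the central operators $h_i(0)$ (possible since these commute and preserve the finite-dimensional common kernel of $\mathfrak{a}_+$ in a fixed degree), the Poincaré--Birkhoff--Witt theorem gives $U(L(\hh_1)\oplus\CC K_1)v=U(\mathfrak{a}_-)v$, a quotient of the Fock module $\CC[h_i(-m):1\le i\le\ell,\ m\ge 1]$. Because $z\neq 0$, the positive mode $h_i(m)$ acts on this polynomial ring as the derivation $mz\,\partial/\partial h_i(-m)$; together with the multiplications these generate an irreducible Weyl-algebra action, so the Fock module is irreducible and the surjection is an isomorphism. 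Its graded dimension in degree $-d$ is the number $p_\ell(d)$ of $\ell$-colored partitions of $d$, the coefficient of $q^d$ in $\prod_{m\ge 1}(1-q^m)^{-\ell}$, which tends to infinity. Hence $\bar V$ contains a genuine highest-weight (Fock) module and its graded dimensions are not uniformly bounded; the lowest-weight case is symmetric.

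It therefore remains to produce the vector in the two-sided case, which is the heart of the matter and amounts to a $\ZZ$-graded version of Stone--von Neumann rigidity. The engine I would use is a trace identity: for each $i$ and $n\ge 1$, taking the trace of $h_i(n)h_i(-n)-h_i(-n)h_i(n)=nz$ on the finite-dimensional space $\bar V_k$ and using cyclicity of the trace around the pair $\bar V_k,\bar V_{k-n}$ yields, with $\gamma_k^{(i,n)}=\mathrm{tr}\big(h_i(-n)h_i(n)|_{\bar V_k}\big)$, the telescoping relation $\gamma_{k-n}^{(i,n)}-\gamma_k^{(i,n)}=nz\,\dim\bar V_k$. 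The program is to combine these relations (summed over $i$, and for varying $n$) with the finiteness of the graded pieces to show that the positive modes, or the negative modes, cannot all act injectively on every $\bar V_k$, thereby forcing a common kernel of $\mathfrak{a}_+$ or of $\mathfrak{a}_-$; equivalently, one analyses the commuting family $\{h_i(-n)h_i(n)\}$ on a single finite-dimensional $\bar V_k$, whose eigenvalues are shifted by $\mp nz$ under the modes, to locate a vector on which all the raising (or all the lowering) operators vanish.

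The main obstacle is exactly this last step. The telescoping identity by itself does not bound $\gamma_k^{(i,n)}$ — over $\CC$ the trace of a product of bounded-rank operators is unconstrained — so one cannot simply telescope to a contradiction; what is really required is the full rigidity that a nonzero-charge graded Heisenberg module with finite-dimensional components decomposes into highest- and lowest-weight Fock modules, which is the content of Futorny's Proposition 4.3(i) and 4.5 in \cite{F}. I would either import that classification directly or establish it through the eigenvalue-ladder analysis above. A cautionary point that the argument must respect is that \emph{all} modes $h_i(n)$, $n\ge 1$, are needed, not merely $n=1$: the two-mode Weyl algebra already admits the module $\CC[x,x^{-1}]$ with uniformly bounded graded pieces, and it is precisely the infinitude of modes in the full Heisenberg algebra that produces the partition growth $p_\ell(d)\to\infty$ underlying the whole result.
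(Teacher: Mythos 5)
First, a point of comparison that matters here: the paper itself offers \emph{no} proof of Proposition~\ref{pf1}. It is introduced with ``We need the following result due to Futorny \cite{F} (Proposition 4.3(i) and 4.5)'' and is then used as a black box in Proposition~\ref{p1}. So the first of your two proposed endings --- ``import that classification directly'' --- is exactly what the paper does, and the material you build around the citation is correct and even adds value: the two easy cases (grading bounded below or above), the PBW argument that an extremal eigenvector generates a quotient of the Fock module, the irreducibility of the Fock module at nonzero level via the Weyl-algebra action (so the quotient \emph{is} the Fock module), and the colored-partition growth $p_\ell(d)\to\infty$ which yields the ``in particular'' clause. Your cautionary example $\CC[x,x^{-1}]$ is also well chosen: besides showing that a single mode pair does not suffice, it shows concretely why your trace engine cannot suffice either, since there $\mathrm{tr}\bigl(h(-1)h(1)|_{\bar V_k}\bigr)=zk$ grows linearly while every graded piece is one-dimensional.

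The genuine gap --- which you candidly flag yourself --- is that in the hard, two-sided unbounded case you have no self-contained argument: the telescoping identity dead-ends for exactly the reason you state, and the ``eigenvalue-ladder analysis'' is announced but never executed, so as a standalone proof the write-up reduces the statement to the very result being proved. For the record, the ladder analysis can be completed, and this is where $z\neq 0$ and the infinitude of modes both enter. With $(h,h)=1$, set $e_n:=\frac{1}{nz}h(-n)h(n)$ for $n\geq 1$; these commute, preserve each $\bar V_k$, satisfy $e_nh(\pm n)=h(\pm n)(e_n\mp 1)$, and commute with all modes $\pm m$, $m\neq n$. Pick a joint eigenvector $v\in\bar V_k$ with eigenvalues $(\lambda_n)$. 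Because $z\neq 0$, the operator $h(n)$ can annihilate a joint eigenvector only when its $e_n$-eigenvalue is $0$, and $h(-n)$ only when it is $-1$; hence each mode can be applied indefinitely in at least one direction, according as $\lambda_n\notin\ZZ_{\geq 0}$ or $\lambda_n\notin\ZZ_{<0}$. If two \emph{distinct} modes $m\neq n$ can be applied indefinitely in opposite directions, the degree-balanced monomials $h(-m)^{na}h(n)^{ma}v$, $a\geq 0$, are nonzero, all lie in the single piece $\bar V_k$, and have pairwise distinct $e_m$-eigenvalues $\lambda_m+na$, contradicting $\dim\bar V_k<\infty$. Otherwise all $\lambda_n$ lie in $\ZZ_{\geq 0}$ or all lie in $\ZZ_{<0}$; in the first case the partition-indexed monomials in the $h(-n)$ are nonzero with distinct joint eigenvalues, giving the unbounded growth directly, and a further balanced-monomial argument shows that only finitely many $\lambda_n$ are nonzero and that $h(n)$ must kill any joint eigenvector of $e_n$-eigenvalue $0$, producing an honest highest weight vector (lowest weight in the second case), after which your Fock-module paragraph finishes the proof. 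Either this analysis or the explicit citation of \cite{F} is needed; with the citation your proposal matches the paper, without it the key step is missing.
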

\begin{ppsn}\label{p1}
$K (A)$ acts trivially on $V$. 
\end{ppsn}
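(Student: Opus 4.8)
The plan is to show that every element $K \otimes a$ acts on $V$ as a scalar, that these scalars assemble into a linear functional $\chi$ on $A$, and that a nonzero $\chi$ would let us manufacture a genuine Heisenberg subalgebra acting with nonzero central charge, contradicting the uniform boundedness of Proposition \ref{ps1} through Futorny's Proposition \ref{pf1}. First I would record that $K$ is central in $\LL = \gg' \rtimes \mathrm{Vir}$: it is central in $\gg'$ and is killed by every derivation $d_n$. Hence $[K \otimes a, X \otimes b] = [K, X] \otimes ab = 0$ for all $X \in \LL$ and $a, b \in A$, so $K \otimes a$ is central in $\LL(A)$. Since $\LL(A)$ has countable dimension over $\CC$, the cyclic module $V$ has at most countable dimension, so Dixmier's form of Schur's lemma gives $\mathrm{End}_{\LL(A)}(V) = \CC$. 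Consequently each $K \otimes a$ acts by a scalar $\chi(a)$, and $a \mapsto \chi(a)$ is a linear functional with $\chi(1) = m = 0$ by hypothesis. The goal becomes to prove $\chi \equiv 0$.

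The crucial step is to extract, from any nonvanishing of $\chi$, a Heisenberg algebra with nonzero central charge. Consider the symmetric bilinear form $B(x,y) := \chi(xy)$ on $A$. Since $\chi(a) = B(a,1)$, a nonzero $\chi$ forces $B \neq 0$, and over a field of characteristic zero a nonzero symmetric form admits a nonisotropic vector; thus there is $a_0 \in A$ with $\chi(a_0^2) = B(a_0,a_0) \neq 0$. Now fix $h \in \fmh$ with $\langle h, h \rangle \neq 0$ and set $H_n := (h \otimes t^n)(a_0) \in \LL(A)$. A direct computation from the bracket of $\gg$ gives
\[
[H_n, H_m] = m\,\delta_{m,-n}\,\langle h, h\rangle\,\chi(a_0^2)\,\mathrm{Id},
\]
so the $H_n$ together with the identity span a copy of the Heisenberg algebra $L(\CC h) \oplus \CC K_1$ of Proposition \ref{pf1}, in which the central element $K_1$ acts by the nonzero scalar $\langle h, h\rangle \chi(a_0^2)$.

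Finally I would restrict $V$ to this Heisenberg subalgebra. Because $[d_0, H_n] = n H_n$, each $H_n$ shifts the $d_0$-eigenvalue by $n$, so the $d_0$-weight decomposition $V = \bigoplus_{n \in \ZZ} V_{\zeta + n}$ realizes $V$ as a $\ZZ$-graded module over $L(\CC h) \oplus \CC K_1$. By Proposition \ref{ps1} the $d_0$-weight spaces are finite dimensional, indeed uniformly bounded. But Proposition \ref{pf1} asserts that a $\ZZ$-graded Heisenberg module with finite dimensional graded spaces on which the center acts by a nonzero scalar cannot have uniformly bounded graded spaces. This contradiction forces $\chi(a_0^2) = 0$, against the choice of $a_0$; hence $\chi \equiv 0$, i.e.\ $K(A)$ acts trivially on $V$.

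I expect the main obstacle to be the second step. Since $K \otimes 1 = 0$, it is tempting to think all relevant central operators vanish, so the real content is recognizing that the products $K \otimes a_0^2$ carry the central charge and using polarization to guarantee that some such product acts nontrivially as soon as $\chi \neq 0$. Matching the resulting Heisenberg precisely to the hypotheses of Proposition \ref{pf1}—in particular verifying that the $d_0$-grading is the relevant $\ZZ$-grading and that its graded pieces are exactly the uniformly bounded spaces of Proposition \ref{ps1}—is the other point demanding care.
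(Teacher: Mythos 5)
Your proof is correct, and its core strategy is the same as the paper's: manufacture a Heisenberg subalgebra of $\LL(A)_0$ whose central element is of the form $K\otimes(\cdot)$, and then play Futorny's Proposition~\ref{pf1} against uniform boundedness of graded pieces. The implementations differ in two ways worth noting. First, you tensor both the positive and the negative modes with the same element $a_0$, so the central charge that appears is $\chi(a_0^2)$, quadratic in $a_0$; this is exactly why you need Dixmier--Schur to define the functional $\chi$ and the polarization argument on the symmetric form $B(x,y)=\chi(xy)$ to produce an $a_0$ with $\chi(a_0^2)\neq 0$. The paper avoids this entirely by choosing an asymmetric Heisenberg: positive modes $h_1\otimes t^n\otimes 1$ and negative modes $h_2\otimes t^{m}\otimes a$ with $\langle h_1,h_2\rangle\neq 0$, whose bracket produces $K\otimes a$ directly, \emph{linear} in $a$; Futorny then kills $K(a)$ for each fixed $a$ with no polarization step. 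Second, you grade all of $V$ by $d_0$-eigenvalues and therefore invoke the full strength of Proposition~\ref{ps1} (uniform boundedness of the $d_0$-weight spaces of $V$), whereas the paper applies Futorny only to the highest weight space $M$, whose uniform boundedness is the easier part of that proposition (the paper's own remark emphasizes that only the bound on $M$ is really needed). Both routes are sound: the paper's mixed Heisenberg is the slicker device, while your polarization trick is a good general-purpose substitute that one can fall back on whenever only ``diagonal'' brackets are available.
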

\begin{proof}
As we have seen that $M$ has uniformly bounded weight spaces, and all the weights of $M$ are of the form $\bar{\lambda} + (\alpha +n) \delta + c \, \omega$. So by letting $M_{\bar{\lambda}+ (\alpha + n) \delta + c \, \omega} : = M_n$, we see that $M$ is $\ZZ$-graded $L(\hh) \oplus \CC K$-module. Let fix $a \in A$ 
and let $h_1, h_2 \in \hh$ such that $\langle h_1, h_2 \rangle \neq 0$. Consider the Heisenberg subalgebra $\mathfrak{H}(a)$ spanned by the elements $\{h_1 \otimes t^n, h_2 \otimes t^m(a):= h_2 \otimes t^m \otimes a, K(a): n>0, m<0, n,m \in \ZZ\}$ with the Lie bracket:
$[h_1 \otimes t^n, h_2 \otimes t^m(a)] = \langle h_1,  h_2 \rangle \delta_{n, -m}K(a)$, and $K(a)$ acts as central element.
Then $M$ is 
$\ZZ$-graded $\mathfrak{H}(a)$-module with uniformly bounded graded spaces. Now Proposition \ref{pf1} forces $K(a)$ to act trivially on $M$. As $a \in A$ was arbitrary, we see that $K(A)$ acts trivially on M and hence on $V$ as $K (A)$ lies in 
center of $\LL(A)$ and $V$ is an irreducible $\LL(A)$-module.
\end{proof}

We next aim to prove existence of a cofinite ideal $R$ of $A$ such that $\LL(R) $ acts trivially on $V$. First step in this direction is the following  proposition:
\begin{ppsn}\label{p4}
There exist a cofinite ideal $I$ of $A$ such that $\mathfrak{g}'(I) V = 0.$
\end{ppsn}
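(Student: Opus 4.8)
The plan is to exploit the $d_0$-grading to reduce everything to finite-dimensional modules for the ``degree zero'' current algebra $\fma\otimes 1\otimes A\cong\fma\otimes A$, and then to use the finiteness of the set of $\gg(A)$-submodules to make the resulting cofinite ideal uniform across all degrees. First I would decompose $V=\bigoplus_{n\in\ZZ}V^{(n)}$ into $d_0$-eigenspaces, $V^{(n)}:=\{v\in V: d_0 v=(\xi+n)v\}=\bigoplus_i V_{\bar\gamma_i+(\xi+n)\delta+c\,\omega}$. Since $P(V)$ meets only the finitely many $\fmh$-weights $\bar\gamma_1,\dots,\bar\gamma_p$ and every weight space is finite dimensional, each $V^{(n)}$ is finite dimensional. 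Because $t^0$ has $d_0$-degree $0$, the subalgebra $\fma\otimes 1\otimes A$ commutes with $d_0$ and hence preserves each $V^{(n)}$; thus $V^{(n)}$ is a finite-dimensional module for the current algebra $\fma\otimes A$.

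Next, for each $n$ with $V^{(n)}\neq 0$, consider the representation $\rho_n:\fma\otimes A\to\mathfrak{gl}(V^{(n)})$. Its kernel is an ideal of $\fma\otimes A$; since $\fma$ is simple, every such ideal has the form $\fma\otimes J_n$ for an ideal $J_n\subseteq A$, and finite dimensionality of the image forces $A/J_n$ to be finite dimensional, i.e. $J_n$ is cofinite. Hence $(\fma\otimes 1\otimes J_n)V^{(n)}=0$. I would then bootstrap this to the whole $\gg(A)$-submodule $W_n:=U(\gg(A))V^{(n)}$: an induction on the number of generators applied, using that $J_n$ is an ideal, that $K(A)$ acts trivially (Proposition \ref{p1}), and that $d_0\otimes A$ commutes with $\fma\otimes 1\otimes A$, shows $(\fma\otimes 1\otimes J_n)W_n=0$. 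Finally the identity $[\,\fma\otimes t^k\otimes 1,\ \fma\otimes 1\otimes J_n\,]=\fma\otimes t^k\otimes J_n$ (valid for $k\neq 0$ because $K(A)$ is killed, and using $[\fma,\fma]=\fma$), together with $(\fma\otimes 1\otimes J_n)W_n=0$, gives $(\fma\otimes\CC[t,t^{-1}]\otimes J_n)W_n=0$.

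It remains to produce a single cofinite ideal valid for every degree, and this is where the genuine difficulty lies. Here I would invoke the finiteness of the set of $\gg(A)$-submodules of $V$ established above (which itself rests on the uniform boundedness coming from Propositions \ref{vcp} and \ref{ps1}): the family $\{W_n:n\in\ZZ\}$ consists of $\gg(A)$-submodules, hence contains only finitely many distinct members. Choosing one cofinite ideal $J_n$ per distinct submodule and setting $I=\bigcap J_n$ over this finite list yields a cofinite ideal with $(\fma\otimes\CC[t,t^{-1}]\otimes I)W_n=0$ for all $n$, so that $(\fma\otimes\CC[t,t^{-1}]\otimes I)V=0$ since $V=\sum_n W_n$. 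Combined with $K(A)V=0$ this is exactly $\gg'(I)V=0$. The real obstacle is precisely the passage from the easy per-degree ideals $J_n$ to a uniform $I$: without extra input the $J_n$ could a priori involve unboundedly many evaluation points as $n$ varies, and it is the finiteness of $\gg(A)$-submodules (equivalently, the uniform bound on weight multiplicities) that rules this out.
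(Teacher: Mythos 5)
Your first two steps are sound: each $d_0$-eigenspace $V^{(n)}$ is indeed a finite-dimensional module for $\fma\otimes A$, the kernel of $\rho_n$ is of the form $\fma\otimes J_n$ with $J_n$ a cofinite ideal, and the closing assembly (finitely many distinct $W_n$, intersect finitely many $J$'s, add $K(A)V=0$) would work \emph{if} the middle step were available. But the middle step --- the bootstrap claim $(\fma\otimes 1\otimes J_n)W_n=0$ for $W_n=U(\gg(A))V^{(n)}$ --- has a genuine gap, and it sits exactly where the difficulty of the proposition lies. In your induction you must commute $x\otimes 1\otimes e$ (with $e\in J_n$) past a generator $y\otimes t^{k}\otimes b$ with $k\neq 0$; the commutator is $[x,y]\otimes t^{k}\otimes eb$, an element of $\fma\otimes t^{k}\otimes J_n$, \emph{not} of $\fma\otimes 1\otimes J_n$, so the inductive hypothesis (annihilation at $t$-degree zero) does not control it. If you strengthen the hypothesis to annihilation in all $t$-degrees, the base case fails: $\fma\otimes t^{k}\otimes J_n$ maps $V^{(n)}$ into $V^{(n+k)}$, and the only way your construction can kill that image is to know $e\in J_{n+k}$ as well. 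This is circular --- controlling degree $k$ over $V^{(n)}$ needs the ideal attached to $V^{(n+k)}$, which is precisely the uniformity in $n$ you are trying to prove. The finiteness of $\gg(A)$-submodules cannot repair it, because you invoke it only \emph{after} the per-$n$ statement $(\fma\otimes\CC[t,t^{-1}]\otimes J_n)W_n=0$ is in hand; moreover, even when $W_n=W_m$ as submodules, their graded pieces $V^{(n)}$ and $V^{(m)}$ may a priori have different annihilator ideals, so equality of the $W$'s does not identify the $J$'s.

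The paper's proof avoids this circularity by using the \emph{other} triangular decomposition. It works inside a single weight space $M_\eta$ of the highest weight space $M$, whose vectors are killed by $L[\overset{\circ}{\mathfrak{n}}_{+}](A)$ in \emph{all} $t$-degrees; this one-sided, all-degree vanishing is what breaks the loop. The cofinite ideal $I$ is defined so that $(x_{-\alpha_i}\otimes 1)(I)M_\eta=0$ for the negative simple root vectors at degree zero; then brackets with $(x_{\alpha_i}\otimes t^{m})(1)$ give $(\fmh\otimes t^{m})(I)M_\eta=0$ for every $m$ (the cross terms die because the positive part kills $M$ in every degree), and an induction on root height shows $L[\overset{\circ}{\mathfrak{n}}_{-}](I)M_\eta$ consists of vectors annihilated by $L[\overset{\circ}{\mathfrak{n}}_{+}](A)$ whose $\fmh$-weights cannot occur in $M$, hence are zero. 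This yields $\gg'(I)M_\eta=0$, and since $\gg'(I)$ is an ideal of $\LL(A)$, the annihilator $\{v\in V:\gg'(I)v=0\}$ is a nonzero submodule, hence all of $V$. Note that this last trick also shows it would suffice for you to kill the full ideal $\gg'(J_n)$ on a single nonzero vector --- but producing such a vector from degree-zero information alone is exactly what your argument cannot do, and why the passage through $M$ is essential.
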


\begin{proof}
Let $\alpha_1, \alpha_2, \ldots, \alpha_n$ denote simple roots of $\fma$. Let $x_{\alpha_i} \in \fma_{\alpha_{i}}$ such that $[x_{\alpha_i}, x_{-\alpha_i} ] = h_{\alpha_i}$. Let $M = \bigoplus_{\beta \in \bar{\hh}^*}{M_{\beta}}$.  Take $\eta \in \bar{\hh}^*$ such that
$M_{\eta} \neq 0$ and let $w_1, \ldots, w_r$ be a basis for $M_{\eta}$.  Define for $1 \leq i \leq n, 1 \leq j \leq r$,  $I_{i, j} := \{ a \in A : (x_{- \alpha_{i}}\otimes 1)(a) w_j = 0\}$.  Observe that $I_{i,j}$'s are cofinite ideal of $A$ as $\fmh(A)$ acts as scalars on $M$. 
Define $I = \prod_{i,j}{I_{i,j}}$. Then $I$ is a cofinite ideal and we have $(x_{-\alpha_i}\otimes 1)( I )M_{\eta} = 0$ for all $1\leq i \leq n$. It follows that $(\fmh \otimes 1)( I) M_{\eta} = 0$. Also for any $0 \neq m \in \ZZ$ and $1 \leq i \leq n,$
$$(h_{\alpha_i} \otimes t^m )(I) M_{\eta} = [(x_{\alpha_i} \otimes t^{m}) (1), (x_{-\alpha_i}\otimes 1) (I) ] M_{\eta} $$ $$=  (x_{\alpha_i} \otimes t^{m})(1) \,(x_{-\alpha_i}\otimes 1) (I) M_{\eta} - (x_{-\alpha_i}\otimes 1)(I) \,( x_{\alpha_i} \otimes t^{m})(1) M_{\eta}.$$
Note that first term of RHS is zero by above argument and second term is also zero as $M_{\eta} \subseteq M$. Now we will prove that $L[\overset{\circ}{\mathfrak{n}}_{-}](I) M_{\eta} = 0$. We will first prove for any $m \in \ZZ,$  $1 \leq i \leq n$, $(x_{-\alpha_i} \otimes t^{m})(I)M_{\eta}$
is a set of  highest weight vectors of $V$.  For any $\beta \in \fpr$, $m, n \in \ZZ$, $a \in A$ consider $$(x_{\beta} \otimes t^{n}) (a) . (x_{- \alpha_i} \otimes t^{m}) (I) M_{\eta} = x_{- \alpha_i} \otimes t^{m} (I) \, (x_{\beta} \otimes t^{n})( a)M_{\eta} $$ $$+ (x_{\beta - \alpha_i} \otimes t^{m +m}) (I) + n\delta_{n , -m}\langle x , y \rangle K(I).$$ We see that second term of RHS is zero as  one three possibilities can occur (i)$\beta - \alpha_i \notin \overset{\circ}{\Delta}$; (ii) $\beta - \alpha_i \in \fpr$; (iii) $\ \beta = \alpha_i$. The first and third sum of RHS are easily seen to be zero. Now we prove that 
for $\beta \in \fpr$, $(x_{-\beta} \otimes t^{m})(I)M_{\eta}$ is a set for highest weight vectors of $V$.  We will prove this using induction on height $\beta$. For height equal to 1 we are done.  Let height of $\beta > 1$, for any $\gamma \in \fpr$ consider 
$$(x_{\gamma} \otimes t^{n}) (a) .\, (x_{- \beta} \otimes t^{m}) (I)M_{\eta} = (x_{- \beta} \otimes t^{m}) (I) \,(x_{\gamma} \otimes t^{n})(a)M_\eta$$ $$ + (x_{- \beta + \gamma} \otimes t^{m +n})(aI) M_{\eta} + \langle x_\gamma , x_{-\beta} \rangle n \delta_{n, -m}K(aI) M_{\eta}.$$ Now clearly first and third terms are zero and second term is zero by induction. Hence we have proved that $L[\overset{\circ}{\mathfrak{n}}_{-}] (I) M_{\eta}$ is a set of highest weight vectors of $V$ which are not contained in $M$ hence must be zero.  Putting everything together, we have $\gg'(I) M_{\eta} = 0$. But as $\gg' (I)$ is
an ideal of $\LL(A)$, the set $\{v \in V: \gg' (I) (v) = 0 \}$ , which contains $M_{\eta}$ is a non-zero $\LL(A)$ submodule of $V$ hence must be $V$.
\end{proof}
\begin{ppsn}\label{p3}
There exists a cofinite ideal $R$ of $A$ such that $\mathrm{Vir}(R) M = 0$.
\end{ppsn}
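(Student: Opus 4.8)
The plan is to view $M$ as a module over the Lie subalgebra $\mathrm{Vir}(A)=\mathrm{Vir}\otimes A$ of $\LL(A)_0$ and to build $R$ one Virasoro mode at a time, exploiting that a single nonzero mode $d_{m_0}$ already forces all the others. First I would record the structural inputs. (i) $\mathrm{Vir}(A)$ preserves $M$: for $v\in M$ and $\alpha\in\fpr$ one has $(x_\alpha\otimes t^k)(d_n\otimes a)v=[x_\alpha\otimes t^k,d_n\otimes a]v=-k\,(x_\alpha\otimes t^{k+n}\otimes a)v=0$, so $(d_n\otimes a)v\in M$. (ii) By Proposition \ref{p1} the affine centre $K(A)$ acts trivially, so the Corollary lets me write $M=\sum_{i=1}^{l}U(L[\fmh](A))\,M_{\beta_i}$ for finitely many weight spaces $M_{\beta_i}$. (iii) By Proposition \ref{p4} the cofinite ideal $I$ satisfies $\gg'(I)V=0$, and since $L[\fmh](I)\subseteq\gg'(I)$, the space $L[\fmh](I)$ annihilates $V$, hence $M$.

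Next, I would fix any $m_0\neq 0$ and set
\[
R_{m_0}^{\beta_i} := \{\,b\in A : (d_{m_0}\otimes b)\,M_{\beta_i}=0\,\}.
\]
Each $R_{m_0}^{\beta_i}$ is cofinite, since $b\mapsto (d_{m_0}\otimes b)|_{M_{\beta_i}}$ takes values in the finite-dimensional space $\mathrm{Hom}(M_{\beta_i},M_{\beta_i+m_0\delta})$, and each is an ideal: because $d_0\otimes c$ preserves the weight space $M_{\beta_i}$ and $[d_0\otimes c,\,d_{m_0}\otimes b]=m_0\,d_{m_0}\otimes cb$ (no central term as $m_0\neq 0$), for $b\in R_{m_0}^{\beta_i}$ and $v\in M_{\beta_i}$ one gets $m_0\,(d_{m_0}\otimes cb)v=(d_0\otimes c)(d_{m_0}\otimes b)v-(d_{m_0}\otimes b)(d_0\otimes c)v=0$. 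I then put $R:=I\cap\bigcap_{i=1}^{l}R_{m_0}^{\beta_i}$, a cofinite ideal, and verify $(d_{m_0}\otimes R)M=0$. For $b\in R$ and a generator $u=\prod_j(h_j\otimes t^{p_j}\otimes a_j)\in U(L[\fmh](A))$ applied to $w\in M_{\beta_i}$, commute $d_{m_0}\otimes b$ past $u$: the term $u(d_{m_0}\otimes b)w$ vanishes as $b\in R_{m_0}^{\beta_i}$, while every interior bracket $[d_{m_0}\otimes b,\,h_j\otimes t^{p_j}\otimes a_j]=p_j\,h_j\otimes t^{p_j+m_0}\otimes b a_j$ lies in $L[\fmh](I)$ (since $b\in I$), hence is the zero operator on $V$; a composition of operators with a zero factor vanishes, so $(d_{m_0}\otimes b)(uw)=0$. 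As such vectors span $M$, this gives $(d_{m_0}\otimes R)M=0$.

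Finally I would bootstrap from the one mode $m_0$ to all of $\mathrm{Vir}(R)$. Since $d_n\otimes 1$ preserves $M$ and $[d_n\otimes 1,\,d_{m_0}\otimes b]=(m_0-n)\,d_{m_0+n}\otimes b$ carries no central term when $n\neq -m_0$, the vanishing $(d_{m_0}\otimes b)M=0$ propagates to $(d_k\otimes b)M=0$ for all $k\neq 0$ (first for $k\notin\{0,2m_0\}$, then filling in $2m_0$ from, e.g., $[d_{-m_0}\otimes 1,\,d_{3m_0}\otimes b]$). For the remaining modes I use, for each $k_1\neq 0$,
\[
[\,d_{k_1}\otimes 1,\ d_{-k_1}\otimes b\,]=-2k_1\,(d_0\otimes b)+\tfrac{k_1^3-k_1}{12}\,(C\otimes b),
\]
whose left side annihilates $M$; as $C\otimes b$ acts on the irreducible $M$ by a scalar $c(b)$, the operator $-2k_1\,(d_0\otimes b)+\tfrac{k_1^3-k_1}{12}\,c(b)$ is zero on $M$ for every $k_1$, so $(d_0\otimes b)|_M=\tfrac{k_1^2-1}{24}\,c(b)\,\mathrm{Id}$ independently of $k_1$; comparing $k_1=1$ and $k_1=2$ forces $c(b)=0$ and hence $(d_0\otimes b)M=0$. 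Thus $\mathrm{Vir}(R)M=0$.

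I expect the crux to be the ideal construction in the second step. Unlike $\fmh(A)$, the mode $d_0\otimes A$ does not act by scalars on $M$, so the clean argument of Proposition \ref{p4} does not transcribe verbatim; the device that rescues it is to work with a nonzero mode $m_0$, where $[d_0\otimes c,\,d_{m_0}\otimes b]$ reproduces $d_{m_0}\otimes cb$ and makes $R_{m_0}^{\beta_i}$ an ideal, together with the fact (from Propositions \ref{p1} and \ref{p4}) that $L[\fmh](I)$ already annihilates $V$, which is exactly what makes the interior commutators vanish in the spanning computation. Recovering the $d_0$ and $C$ modes via the central-term bookkeeping is then routine.
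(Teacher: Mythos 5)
You are correct, and your argument is essentially the paper's own: the same three devices appear --- annihilator ideals that are cofinite by finite-dimensionality of weight spaces and are ideals via bracketing with $d_0 \otimes A$; extension from the spanning weight spaces $M_{\beta_i}$ to all of $M$ using the Corollary together with $L[\fmh](I)V = 0$ from Proposition \ref{p4}; and separation of $d_0 \otimes b$ from $C \otimes b$ by comparing Virasoro brackets at two distinct mode indices. The difference is purely organizational: the paper first kills $d_0$ and $C$ on a single weight space (via ideals attached to four nonzero modes $d_{\pm j}, d_{\pm(j+r)}$), then extends $d_0(R)$ over $M$ by the same commutator induction you use, and finally recovers $d_n(R)$ from $[d_0(R), d_n(1)]$, whereas you give the hard treatment to one nonzero mode $d_{m_0}$ only and recover all remaining modes, with $d_0$ and $C$ last, by bracketing with $\mathrm{Vir} \otimes 1$ (which preserves $M$) --- a reshuffling of the same route, arguably slightly more economical, but not a different method.
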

\begin{proof}
 Let $M = \bigoplus_{n \in \ZZ}{M_{\xi + n}}$ be weight decomposition of $M$ with respect to $d_0$.  Let $i \in \ZZ$ such that 
$M_{\xi + i} \neq 0$ and for $j \neq 0$, consider the set $I_j = \{a \in A : d_j (a) M_{\xi + i} = 0\}$. Then $I_j$ is an ideal of $A$ as for any $b \in A$ and $a \in I_j$ consider $[d_0(b), d_j(a)] M_{\xi + i} = d_0(b) d_j(a)M_{\xi + i} -  d_j(a) d_0(b)M_{\xi + i} $. Now first term of RHS is zero and for the second term note that as $d_0(b)$ preserves $M_{\xi + i}$, it should be also zero. Also $I_j$ is a cofinite ideal as it can be seen as a kernel of a linear map $\phi_j: A \rightarrow M_{\xi + i}  \oplus \cdots \oplus M_{\xi + i} (q \,\, \mathrm{times})$, $\phi_j(a) = (d_j (a) w_1,\ldots, d_j(a) w_{q} )$, where $\mathrm{dim}\, M_{\xi + i}
=q $ and $\{w_1, \ldots ,w_q\}$ is a basis of $M_{\xi + i} $. Similarly for $-j$  we get an ideal $I_{-j}$ and consider $$[d_j(I_j), d_{-j}(I_{-j})] = (-2)d_0(I_{j} I_{-j}) + \frac{j^3 -j}{12} C(I_{j} I_{-j}).$$  Similarly replacing $j$ by $j+r$ where $j+r \neq 0$ we get
$$[d_{j+r}(I_{j+r}), d_{-(j + r)}(I_{-(j + r)})] =$$ $$ (-2(j+r))d_0(I_{j+r} I_{-(j+r})) + \frac{(j+r)^3 -(j+r)}{12} C(I_{j+r} I_{-(j+r)}).$$ Define a cofinite ideal $S = I_{j} I_{-j} I_{j+r}I_{-(j+r)}$ we deduce that $d_0(S)M_{\xi + i} = 0  = C(S)M_{\xi + i}$. Now as $M = \sum_{i = 1}^{m}{U(L[\fmh](A))M_{\beta_i}}$, where $\beta_i
= \bar{\lambda} + (\xi + r_i)\delta + c \, \omega, r_i \in \ZZ$, and noting that as a  $d_0$ weight space $M_{\beta_i} = M_{\xi + r_i}$ and replacing the cofinite ideal S by  product  of finitely many cofinite ideals  say $S'$, we get $d_0(S')(\sum_{i}^{m}M_{\xi + r_i}) = 0$ and $C(S')(\sum_{i}^{m}M_{\xi + r_i}) $.
Now let $R := S'I$, where $I$ is the cofinite ideal of Proposition \ref{p4},
we have the following claim:\\
{\bf{Claim:} }$d_0 (R) (M) = d_0(R)\sum_{i = 1}^{m}{U(L[\fmh](A))M_{\beta_i}} = 0$.\\
{\bf{Proof of the Claim:}} It is enough to show that $[d_0(R), U(L[\fmh](A))]M_{\beta_i} = 0$, $ 1 \leq i \leq m$. Let $U(L[\fmh](A))  = \CC \oplus U_{0}(L[\fmh](A))$, where $U_{0}(L[\fmh](A)):= L[\fmh](A)U(L[\fmh](A))$ is the augumentation ideal of $U(L[\fmh](A))$. It is suffices to show
that $[d_0(R), U_{0}(L[\fmh](A))] M_{\beta_i} = 0$ for a fixed $i$, $1 \leq i \leq m$. 
An arbitrary element of $U_{0}(L[\fmh](A))$ is sum of elements of the form $h_{i_1} \otimes t^{m_1}(a_{j_1}) \cdots h_{j_t} \otimes t^{m_t}(a_{j_t})$, where $h_{i_k} \in \fmh, 1 \leq k \leq t$, $m_i \in \ZZ, ,1 \leq i \leq t, a_{j_k} \in A, 1 \leq k \leq t$.
We will prove $$[d_0 (R), h_{i_1} \otimes t^{m_1}(a_{j_1}) \cdots h_{i_t} \otimes t^{m_t}(a_{j_t})]M_{\beta_i} =  0.$$ We use induction on $t$. For $t = 1$, $b \in R$  consider $$[d_0(b), h_{i_1} \otimes t^{m_1}(a_{j_1})]M_{\beta_i} = h_{i_1} \otimes t^{m_1}(a_{j_1})d_0(b) M_{\beta_i} + 
m_1 h_{i_1} \otimes t^{m_1}(ba_{j_1})M_{\beta_i}$$
which is clearly zero. Now for arbitrary $t$ consider
$$[d_0 (b), h_{i_1} \otimes t^{m_1}(a_{j_1}) \cdots h_{i_t} \otimes t^{m_t}(a_{j_t})]M_{\beta_i}  = $$
$$ \,\,\,\,\,\,\,\,\,\,\,\,\,\,\, d_0 (b) h_{i_1} \otimes t^{m_1}(a_{j_1}) \cdots h_{i_t} \otimes t^{m_t}(a_{j_t})M_{\beta_i} =$$
$$ h_{i_1} \otimes t^{m_1}(a_{j_1}) d_0(b) \cdots h_{i_t} \otimes t^{m_t}(a_{j_t})M_{\beta_i} + m_1h_{i_1} \otimes t^{m_1}(ba_{j_1}) \cdots h_{i_t} \otimes t^{m_t}(a_{j_t})M_{\beta_i}.$$

First term of RHS is zero by induction and and also is the second term as $U(L[\fmh](A))$ is an abelian algebra. This completes the proof of claim. Finally,
  $0 \neq n \in \ZZ, \frac{1}{n}d_n (R)(M) = [d_0(R), d_n(1)]M = 0$ as $d_n(1)$ leaves $M$ invariant. 
 \end{proof}
 Let $R$ be an ideal from the above Proposition. We have the following: 
 \begin{ppsn} \label{mp1}
 $\mathfrak{L}(R) V = 0$. 
 \end{ppsn}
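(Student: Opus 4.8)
The plan is to recognise $\LL(R)$ as an \emph{ideal} of $\LL(A)$ and then combine this with irreducibility and the two preceding propositions; essentially everything is a wrap-up once Propositions \ref{p4} and \ref{p3} are in hand. First I would record the structural fact that $\LL(R) = \LL \otimes R$ is an ideal of $\LL(A) = \LL \otimes A$. Indeed, since $R$ is an ideal of $A$, for any $a \in A$, $r \in R$ and $X, Y \in \LL$ the bracket $[X \otimes a, Y \otimes r] = [X,Y] \otimes ar$ lies in $\LL \otimes R$ because $ar \in R$; hence $[\LL(A), \LL(R)] \subseteq \LL(R)$.

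Next I would introduce the annihilator $W := \{ v \in V : \LL(R) v = 0 \}$ and check that it is an $\LL(A)$-submodule. For $v \in W$, $x \in \LL(A)$ and $y \in \LL(R)$ we compute $y(xv) = x(yv) + [y,x]v = [y,x]v$, and $[y,x] \in \LL(R)$ precisely because $\LL(R)$ is an ideal, so $[y,x]v = 0$ and thus $xv \in W$. Therefore $W$ is stable under the $\LL(A)$-action. This is the step that upgrades ``$\LL(R)$ kills the highest weight space'' to ``$\LL(R)$ kills all of $V$'': no separate commutation of $\mathrm{Vir}(R)$ past the lowering operators $L[\nm](A)$ is required, since that computation is absorbed into the submodule argument.

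The third step is to show $W \neq 0$ by verifying $M \subseteq W$. Decomposing $\LL(R) = \gg'(R) \oplus \mathrm{Vir}(R)$ as vector spaces (using $\LL = \gg' \rtimes \mathrm{Vir}$), I would note that $R = S'I \subseteq I$, so Proposition \ref{p4} gives $\gg'(R) V = 0$, in particular $\gg'(R) M = 0$, while Proposition \ref{p3} gives $\mathrm{Vir}(R) M = 0$. Combining these yields $\LL(R) M = 0$, i.e. $M \subseteq W$. Since $M$ is nonzero, $W$ is a nonzero submodule of the irreducible module $V$, and irreducibility forces $W = V$, which is exactly $\LL(R) V = 0$.

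The only point that genuinely requires care is the ideal property of $\LL(R)$ and the containment $R \subseteq I$; these are what let both propositions feed into a single submodule argument. There is no real analytic or combinatorial obstacle here, so I expect the proof to be short, with the ideal/annihilator mechanism doing all the work.
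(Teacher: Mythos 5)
Your proposal is correct and is essentially the paper's own argument: the paper likewise defines the annihilator $\mathcal{W} = \{v \in V : \LL(R)v = 0\}$, observes it contains $M$ (using $\gg'(R)V = 0$ from Proposition \ref{p4} and $\mathrm{Vir}(R)M = 0$ from Proposition \ref{p3}), checks it is an $\LL(A)$-submodule via the same bracket computation, and concludes by irreducibility. Your phrasing of the submodule check abstractly through the ideal property of $\LL(R)$ is a slightly cleaner packaging of the element-by-element verification the paper carries out, but the mechanism is identical.
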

\begin{proof}
 We already have $\gg'(R)V = 0$, and also $\text{Vir}(R)M = 0$. So the set $\mathcal{W}: = \{v \in V: \mathcal{L}(R)v = 0\}$ is non-zero as it contains $M$.
 We will be done if we show that $\mathcal{W}$ is a $\LL(A)$ submodule. But this is a easy checking. For $x, y \in \fma, n,m \in \ZZ, a \in A$, and $b\in R$ and $v \in \mathcal{W}$ consider
 $y\otimes t^{n}(b) x \otimes t^{m}(a)v = x \otimes t^{m}(a)y\otimes t^{n}(b) v + [y\otimes t^{n}(b), x\otimes t^{m}(a)]v$ which is clearly zero. Similarly one can check other cases also.
\end{proof}
Now as $A/R$ is finite dimensional, hence both Artinian and Noetherian, without loss of generality we can assume
that $R = \M_1^{k_1}\M_2^{k_2}\cdots \M_q^{k_q}$, where $\M_i$ for $1 \leq i \leq q$ denotes maximal ideal of $A$.
We have the following proposition:
\begin{ppsn}
Let $R_1$ and $R_2$ be two co-prime  cofinite ideals of $A$ such that $\mathfrak{L}_0(R_1 R_2) M = 0$, then either $\mathfrak{L}_0(R_1)M = 0$ or $\mathfrak{L}_0(R_2)M = 0$.
\end{ppsn}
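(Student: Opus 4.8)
The plan is to exploit coprimality through the Chinese Remainder Theorem and reduce the statement to an impossibility for tensor products. Since $R_1 + R_2 = A$ we have $R_1 R_2 = R_1 \cap R_2$ and $A/(R_1 R_2) \cong A/R_1 \times A/R_2$; as $\mathfrak{L}_0(R_1 R_2) M = 0$, the module $M$ is in fact a module over $\mathfrak{L}_0 \otimes A/(R_1 R_2) \cong \bar{\LL}_1 \oplus \bar{\LL}_2$, where $\bar{\LL}_i := \mathfrak{L}_0 \otimes A/R_i$ are ideals whose mutual bracket lands in $\mathfrak{L}_0(R_1 R_2)$ and hence annihilates $M$. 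First I would set $\mathcal{W}_i := \{v \in M : \mathfrak{L}_0(R_i) v = 0\}$. Using $[\mathfrak{L}_0(R_i), \mathfrak{L}_0(A)] \subseteq \mathfrak{L}_0(R_i)$ together with the vanishing of $\mathfrak{L}_0(R_1 R_2)$ on $M$, a one-line commutator computation (for $v \in \mathcal{W}_i$, $X \otimes a \in \mathfrak{L}_0(A)$ and $Y \otimes b \in \mathfrak{L}_0(R_i)$ one has $(Y \otimes b)(X \otimes a) v = (X \otimes a)(Y \otimes b) v + [Y,X] \otimes ba\, v = 0$) shows each $\mathcal{W}_i$ is an $\mathfrak{L}_0(A)$-submodule of $M$. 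By irreducibility $\mathcal{W}_i \in \{0, M\}$, and $\mathcal{W}_i = M$ is exactly the conclusion $\mathfrak{L}_0(R_i) M = 0$. Thus it suffices to rule out $\mathcal{W}_1 = \mathcal{W}_2 = 0$.

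Assuming $\mathcal{W}_1 = \mathcal{W}_2 = 0$, I would view $M$ as an irreducible weight module with finite-dimensional weight spaces over the direct sum $\bar{\LL}_1 \oplus \bar{\LL}_2$ and invoke the standard outer tensor decomposition $M \cong M_1 \boxtimes M_2$, with $M_i$ an irreducible $\bar{\LL}_i$-module; the hypothesis $\mathcal{W}_1 = \mathcal{W}_2 = 0$ guarantees that neither $M_1$ nor $M_2$ is trivial. The key structural input is that a nontrivial irreducible $\bar{\LL}_i$-module is infinite-dimensional: a finite-dimensional module would force the genuine Virasoro subalgebra $\mathrm{Vir} \otimes 1 \subseteq \bar{\LL}_i$ to act trivially (Virasoro admits no nontrivial finite-dimensional representations), and then, arguing exactly as in the opening Remark and using that $K$ and $C$ act trivially, every $d_n \otimes \bar{a}$, every $h \otimes t^m \otimes \bar{a}$ with $m \neq 0$, and finally every $\fmh \otimes \bar{a}$ (via $\fmh \otimes \bar a = -[d_1 \otimes 1, h \otimes t^{-1} \otimes \bar a]$) would act trivially, making $M_i$ trivial. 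Consequently both $M_i$ are infinite-dimensional with finite-dimensional $d_0$-graded pieces, so each has infinite $d_0$-support; since $d_{\pm 1} \otimes 1$ and $\fmh \otimes t^{\pm 1} \otimes 1$ shift the $d_0$-degree by $\pm 1$, these supports are (cosets of) gapless half-infinite or bi-infinite intervals.

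Finally I would derive the numerical contradiction. Under the decomposition $d_0 \otimes 1$ acts as the sum of its restrictions to the two tensor factors, so the $d_0$-grading of $M$ is the convolution of the $d_0$-gradings of $M_1$ and $M_2$. Finite-dimensionality of the weight spaces of $M$ already forbids the two supports from being oppositely directed (two opposing gapless half-lines would contribute infinitely many nonzero summands to a single $d_0$-degree), so both supports are bounded on the same side; but then the convolution of two gapless infinite one-sided intervals has dimensions growing at least linearly, contradicting the uniform bound on the $d_0$-weight spaces of $M$ supplied by Proposition \ref{ps1}. This forces one of $\mathcal{W}_1, \mathcal{W}_2$ to equal $M$, which is the assertion. \textbf{The main obstacle} is the structural step of the middle paragraph: justifying the outer tensor decomposition over $\bar{\LL}_1 \oplus \bar{\LL}_2$ and pinning down the $d_0$-supports of nontrivial $\bar{\LL}_i$-modules to be gapless and infinite. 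Once these are in place, converting them into the convolution/uniform-bound contradiction is routine bookkeeping.
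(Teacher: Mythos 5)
Your opening reduction is correct and matches the paper's strategy in spirit: the CRT splitting $\LL_0(A)/\LL_0(R_1R_2)\cong \LL_0(A/R_1)\oplus\LL_0(A/R_2)$, and the observation that each $\mathcal{W}_i=\{v\in M:\LL_0(R_i)v=0\}$ is an $\LL_0(A)$-submodule of the irreducible module $M$, so that it suffices to rule out $\mathcal{W}_1=\mathcal{W}_2=0$. Your endgame (convolution of supports against the uniform bound of Proposition \ref{ps1}) is also sound bookkeeping, and in fact treats the one-sided-support case more carefully than the paper does. The problem is that the two steps you yourself flag as ``the main obstacle'' are exactly where the content of the proposition lies, and neither is a routine citation. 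The outer tensor decomposition $M\cong M_1\boxtimes M_2$ is not automatic for infinite-dimensional irreducible modules over a direct sum of Lie algebras; it needs a Jacobson-density type lemma with finiteness hypotheses, and, more basically, before one can even speak of $M_1,M_2$ as graded modules with finite-dimensional graded pieces one must know that $d_0^1=d_0\otimes(1+R_1)$ and $d_0^2=d_0\otimes(1+R_2)$ act semisimply on $M$: a priori only their sum $d_0\in\bar{\hh}$ does. The paper settles precisely this semisimplicity first (by the argument of Theorem 4.3 of \cite{ZRX}) and then avoids the tensor decomposition altogether: it works with the resulting bigrading $M=\bigoplus_{p,q}M_{p,q}$ and its slices $M^{(q)}=\bigoplus_{p}M_{p,q}$ and $M_{(p)}=\bigoplus_{q}M_{p,q}$, which are modules over the two summands; everything you wanted from $M_1\boxtimes M_2$ is read off from these slices, so the decomposition is an avoidable (and unproven) detour.

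Second, your justification of ``gapless infinite support'' --- that $d_{\pm1}\otimes 1$ and $\fmh\otimes t^{\pm1}\otimes 1$ shift the $d_0$-degree by $\pm1$ --- proves nothing: such operators are present in any graded module, but nothing prevents them from acting by zero between two given degrees, and your convolution estimate collapses if the supports, though infinite, are sparse. What is actually needed is representation theory of graded Virasoro modules with bounded graded pieces: a graded Vir-module supported on finitely many degrees has entirely trivial Vir-action, and, given the uniform bound of Proposition \ref{ps1}, a nontrivial Vir-action forces (intermediate-series type) nonzero components in every degree with nonzero $d_0$-eigenvalue, since nontrivial highest or lowest weight Vir-modules have unbounded multiplicities. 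This is the input behind the paper's terse step ``as a copy of $\mathrm{Vir}$ is contained in $\LL_0^{1}$\ldots'', which yields $M_{p,q}\neq 0$ for all $\alpha_1+p\neq0$ and all $\alpha_2+q\neq0$, after which the contradiction is immediate: the single weight space $M_{\alpha}=\bigoplus_{p+q=0}M_{p,q}$ would be infinite-dimensional. So your strategy can be completed, but both deferred steps require real arguments; the first is best eliminated in favour of the paper's slice argument, and the second must be replaced by an appeal to Virasoro module theory rather than to the mere existence of degree-shifting operators.
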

\begin{proof}
Let $M = \bigoplus_{n \in \ZZ}{M_{\xi + n}} $ be $d_0$ weight decomposition of $M$. Now as $R_1$ and $R_2$ are coprime $\LL{_0}^{1,2} : = \LL_{0}(A)/ \LL_{0}(R_1 R_2) \cong \LL_{0}(A/R_1) \oplus \LL_{0}(A/R_2)$.  Denote $\LL_0^{1} :=  \LL(A/R_1) $ and $\LL_0^{2} :=  \LL(A/R_1)$.
Let $d_0^{1} = d_0 \otimes (1 + R_1) \in \LL_0^{1} $ and $d_0^{2} = d_0 \otimes (1 + R_2) \in \LL_{0}^2$ and $d_0^{1,2} = d_0^{1} + d_0^{2}$. We need to show that either $\LL_0^{1}$ or $\LL_0^{2}$ act trivially on $V$.  Now as $d_0^{1,2}, d_0^{1}, d_0^2$ commute with each other 
by similar argument as Theorem 4.3 of \cite{ZRX}, $d_0^{1}$ and  $d_0^2$ are diagonalisable on $V$. Now as $[d_0^{1,2}, (h \otimes t^n) \otimes(a + R_1 R_2)] = n(h\otimes t^n) \otimes (a + R_1 R_2 )$, where $h \in \fmh$,
 we can write $M = \bigoplus_{p.q \in \ZZ}{M_{p,q}}$ where $M_{p, q} = \{v \in M: d_0^1 (v) = (\alpha_1 + p)v, d_0^2 (v) = (\alpha_2 + q) v\}$. Here $\alpha_1 + \alpha_2 = \alpha$.  The subspaces 
 $M^{(q)} := \bigoplus_{p\in \ZZ}{M_{p,q}}$ and $M_{(p)}: =  \bigoplus_{q \in \ZZ}{M_{p,q}}$ are $\LL_{0}^1$ and $\LL_{0}^2$ modules respectively. Now if $\LL_{0}^1$ acts trivially on $M_{(q)}$, then since $\LL_0^{1}$ and $\LL_0^{2}$ commute with each other, it would force that
 $\LL_0^{1}$ acts trivially on $M = U(\LL_0^{2})M_{(q)}$. Hence assuming $\LL_0^{1}$ and $\LL_0^{2}$ acts non-trivially on $M_{(q)}$ and $M^{(p)}$ respectively.  Now for $M_{q} \neq 0$, as a copy of $\mathrm{Vir}$ is contained in $\LL_0^{1}$, it follows that $M_{p, q} \neq 0$ for all
 $\alpha_1 + p \neq 0$ and similarly for $M^{(p)} \neq 0$ for all $\alpha_2 + q \neq 0$.  But this forces that $\bigoplus_{p+q = 0}{M_{p,q}} = M_{\alpha_1 + \alpha_2} = M_{\alpha}$ is infinite dimensional which is a contradiction. 
 \end{proof}
In the light of the above proposition we can assume that $\mathfrak{L}_0({\M_1}^{k_1})M = 0$.  We have the following proposition:
\begin{ppsn}\label{p5}
Let $\M_1$ be a maximal ideal such that $\mathfrak{L}_0({\M_1}^{k})M = 0$ for some positive integer $k$, then we have $\LL_0(\M_1) M = 0$.
\end{ppsn}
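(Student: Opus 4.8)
The plan is to reduce the statement to the triviality of a nilpotent ideal and to attack that by downward induction on powers of the radical. Since $\LL_0(\M_1^k)M=0$, the space $M$ is a module for $\LL_0(\bar A)$, where $\bar A=A/\M_1^k$ is a local Artinian ring with nilpotent maximal ideal $\mathfrak r=\M_1/\M_1^k$, say $\mathfrak r^k=0$; the assertion $\LL_0(\M_1)M=0$ is exactly $\LL_0(\mathfrak r)M=0$. I would prove, by downward induction on $i$, that $\LL_0(\mathfrak r^i)M=0$: the case $i=k$ is vacuous, and at the inductive step I may assume $\LL_0(\mathfrak r^{i+1})M=0$ and work modulo $\mathfrak r^{i+1}$, so that $W:=\mathfrak r^i$ satisfies $\mathfrak r\cdot W=0$ and $W\cdot W=0$. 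Because $K(A)$ and the Virasoro centre $C$ already act trivially (Proposition \ref{p1} and the remark following Proposition \ref{ps1}), the bracket computations show that $\mathfrak a:=\LL_0(W)$ is an \emph{abelian ideal} of $\LL_0(\bar A)$. Since $\{v\in M:\mathfrak a v=0\}$ is visibly a submodule, irreducibility of $M$ reduces the whole problem to producing one nonzero vector killed by $\mathfrak a$, i.e. to showing that $\mathfrak a$ acts with trivial generalized character.

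The first, and clean, step is to dispose of the degree-zero Cartan part $(\fmh\otimes 1)(W)$. A direct check (the $K$-terms vanish because $K$ acts trivially, and degree-$0$ Cartan elements commute with $\mathrm{Vir}$) shows $(\fmh\otimes 1)(W)$ is central in $\LL_0(\bar A)$, hence acts by a scalar on the irreducible module $M$. To see the scalar is $0$, I write $h_{\alpha_i}\otimes w=[x_{\alpha_i}\otimes 1,\,x_{-\alpha_i}\otimes w]$ for a simple root $\alpha_i$ and $w\in W$. Here $e:=x_{\alpha_i}\otimes 1\in L[\np](A)$ annihilates $M$, while $f:=x_{-\alpha_i}\otimes w$ is a real root vector and so acts locally nilpotently by integrability; moreover $t:=[e,f]=h_{\alpha_i}\otimes w$ commutes with $f$ because $w^2=0$. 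Taking $0\neq v\in M$ and the least $N$ with $f^N v=0$, the identity $[e,f^N]=N f^{N-1}t$ gives $0=e f^N v=N f^{N-1}tv$; since $t$ acts as a scalar and $f^{N-1}v\neq0$, that scalar must vanish. As the $h_{\alpha_i}$ span $\fmh$, we get $(\fmh\otimes 1)(W)M=0$.

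What remains is the \textbf{genuine obstacle}: the modes $d_n(W)$ and the higher Heisenberg modes $(\fmh\otimes t^n)(W)$ with $n\neq0$. These cannot be removed by the previous trick, precisely because they are not brackets of elements of $L[\np](A)$ (the part annihilating $M$), and because an element of nonzero $d_0$-degree in the abelian ideal need not act locally nilpotently on $M$. The tool I would use here is the uniform bound on the $d_0$-weight spaces of $M$ (Proposition \ref{ps1} and the remark following it). Concretely, the Virasoro relation $[d_m(1),d_{-m}(w)]=-2m\,d_0(w)$ (the $C$-term being absent) yields, for the weight-space traces $\tau_p:=\mathrm{tr}(d_0(w)|_{M_p})$, a telescoping identity $\tau_p=\tfrac{1}{2m}\big(\sigma_m(p)-\sigma_m(p-m)\big)$ with $\sigma_m(p)=\mathrm{tr}(d_{-m}(w)d_m(1)|_{M_p})$; the plan is to show, exploiting uniform boundedness as in \cite{ZRX} and \cite{SAV}, that the trace data are bounded quasi-polynomials in $p$ and hence vanish, forcing $d_0(W)$ to act nilpotently and then the off-diagonal modes $d_n(W)$ and $(\fmh\otimes t^n)(W)$ to act trivially. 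Equivalently, one may restrict $M$ to the Virasoro current subalgebra $\mathrm{Vir}(\bar A)$, which is then a Harish-Chandra module, and invoke the structure theory of \cite{ZRX,SAV} (together with Futorny's Proposition \ref{pf1}) to conclude that the nilpotent radical acts trivially, propagating to all of $M$ since $\LL_0(\mathfrak r)$ is an ideal.

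With $\mathfrak a=\LL_0(W)$ shown to act trivially, the induction advances from $\mathfrak r^{i+1}$ to $\mathfrak r^{i}$; iterating down to $i=1$ gives $\LL_0(\mathfrak r)M=0$, that is $\LL_0(\M_1)M=0$. I expect the hard part to be entirely in the third paragraph: controlling the non-semisimple Virasoro-radical action with only the uniform multiplicity bound as leverage, which is exactly where the techniques of \cite{ZRX} and \cite{SAV} are needed.
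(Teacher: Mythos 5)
Your strategy is essentially the paper's own: reduce to a square-zero situation (your downward induction on $\mathfrak{r}^i$ plays the role of the paper's induction on $k$ through $\M_1^{k-1}/\M_1^{k}$), kill the degree-zero Cartan part by an $\mathfrak{sl}_2$/integrability trick, and treat $d_0(\M_1)$ by importing Proposition 4.1 of \cite{ZRX} or Proposition 4.5 of \cite{SAV}, after which the modes $d_n(\M_1)$ and $(\fmh\otimes t^n)(\M_1)$, $n\neq 0$, die by bracketing with $d_0$. However, your ``clean'' second paragraph contains a genuine gap, at exactly the point where the paper inserts an auxiliary claim. You justify $[t,f]=0$ by saying $w^2=0$. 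But $[h_{\alpha_i}\otimes w,\,x_{-\alpha_i}\otimes w]=-\alpha_i(h_{\alpha_i})\,x_{-\alpha_i}\otimes w^2$, and this element lies in $L[\overset{\circ}{\mathfrak{n}}_{-}](A)$, not in $\LL_0(A)$; moreover you need it to annihilate the vectors $f^sv$, which for $s\geq 1$ lie outside $M$ (their $\fmh$-weight is $\bar{\lambda}-s\alpha_i$, never a weight of $M$). Your inductive hypothesis $\LL_0(\mathfrak{r}^{i+1})M=0$ only licenses ``working modulo $\mathfrak{r}^{i+1}$'' for operators from $\LL_0$ acting on $M$; it says nothing about $L[\overset{\circ}{\mathfrak{n}}_{-}](\M_1^{i+1})$ acting on $V$, and in $A$ itself a lift of $w$ certainly need not square to zero (take $A=\CC[s]$, $\M_1=(s)$, $w=s^i$).

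This is precisely why the paper, before running the very same $\mathfrak{sl}_2$ computation, first proves the Claim that $\LL(\M_1^2)V=0$: using $\LL_0(\M_1^2)M=0$ and the argument of Proposition \ref{p4}, the set $L[\overset{\circ}{\mathfrak{n}}_{-}](\M_1^2)M$ consists of highest weight vectors whose weights are not weights of $M$, hence it vanishes, giving $\LL(\M_1^2)M=0$; a commutator induction on the length of monomials in the augmentation ideal $U_0(L[\overset{\circ}{\mathfrak{n}}_{-}](A))$ then upgrades this to all of $V$. Only after this is the commutation you invoke legitimate. Your argument can be repaired by inserting the analogous step into your induction --- prove $\LL(\mathfrak{r}^{i+1})V=0$, not merely $\LL_0(\mathfrak{r}^{i+1})M=0$, before the Cartan computation --- but as written that step fails. (Your third paragraph is consistent with the paper's treatment: both delegate the $d_0(\M_1)$-step to \cite{ZRX} and \cite{SAV}, and your propagation to the $n\neq 0$ modes is the same bracket argument the paper uses.)
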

\begin{proof}
We use induction on $k$. For $k = 1$ result is obvious. Assume $k = 2 $.
Let us denote by $G:= L[\fmh] \rtimes \mathrm{Vir}$. It follows that $G(M)$ is an abelian subalgebra of $G(A)$. It is enough  to show that $G(\M_1)M = 0$ as $K(A)$ acts trivially on $V$. Now imitating step by step from Proposition 4.1 of \cite{ZRX} or Proposition 4.5 of \cite{SAV}, we obtain
that for any $b \in \M_1$, $d_0(b)$ acts trivially on $M$. Hence for $0 \neq m \in \ZZ, h \in \fmh, e \in \M_1, \frac{1}{m}(h \otimes t^m (e))M = [d_0(e), h \otimes t^{m}(1)]M = 0$ and similarly  $d_k(e)M = 0$ for $0 \neq k \in \ZZ, e \in \M_1$.  So we only need to prove that
$(\fmh \otimes 1) (\M_1)M = 0$. Note that $(\fmh \otimes 1) (\M_1)$ lies in center of $G$.  So we have $h\otimes1(e) v = R_{h,e} \,v$, where $h \in \fmh, 0 \neq v \in M, e\in \M_1$ and $R_{h, e}$ a scalar depending on $h$ and $e$. We aim to show that $R_{h, e} = 0 \,\,\forall  \,\,h \in \fmh, e \in \M_1$.
It is enough to show that $R_{h_i, e} = 0$, for $1 \leq i \leq n$, where $h_i = [ x_{\alpha_i}, x_{- \alpha_i}], x_{\alpha_i} \in \fma_{\alpha_i}$.  We first make the following:\\
{\bf{Claim:}} $\LL(\M_1^2)V = 0$. Assuming the claim we prove that $R_{h_i, e} = 0$, for $1 \leq i \leq n$. Let fix $j$ from $1 \leq j \leq n$, $0 \neq v \in M$ and $e \in \M_1$. Let $k$ be the smallest positive integer such that $((x_{-\alpha_j}\otimes1)(e))^k v  =0. $ Here we use that $V$ is an integrable module.
Now consider 
$$0 = (x_{\alpha_j} \otimes 1)(1) ((x_{-\alpha_j} \otimes 1) (e))^k v = k ((x_{-\alpha_j} \otimes )1(e))^{k-1}  (h_j \otimes 1)(e)v$$ $$  = k R_{h_j, e} ((x_{-\alpha_j} \otimes 1) (e))^{k-1} v .$$
Above we used the claim so that $(h_j \otimes 1)(e)$ commutes with $((x_{-\alpha_j} \otimes 1)(e))^{k-1}$.  Now we deduce that $R_{h_j, e} = 0$.\\
{\bf{Proof of the claim:}} First as $\LL_0(\M_1^2) M = 0$, by similar arguments in Proposition \ref{p4} $L[\overset{\circ}{\mathfrak{n}}_{-}](\M_1^2)M$ is a set highest wight vectors of $V$ such that $L[\overset{\circ}{\mathfrak{n}}_{-}](\M_1^2)M \,\,\cap \,\,M = 0$. Hence $L[\overset{\circ}{\mathfrak{n}}_{-}](\M_1^2)M =0$
and we have $\LL(\M_1^2)M = 0$. Finally $\LL(\M_1^2) V = \LL(\M_1^2) U(L[\overset{\circ}{\mathfrak{n}}_{-}](A))M$. But as $\LL(\M_1^2)M = 0$, to show $ \LL(\M_1^2) V = 0$, we need to show that $[\LL(\M_1^2), U_0(L[\overset{\circ}{\mathfrak{n}}_{-}](A))]M = 0$.  This can be seen using an simple induction
on the length of typical vectors of  $U_0(L[\overset{\circ}{\mathfrak{n}}_{-}](A)).$

Now we prove the result for general $k$. Define the subalgebra $\LL_0^{k-1} :=  (L[\fmh] \rtimes \mathrm{Vir} )( \frac{{\M_1}^{k-1}}{{\M_1}^{k}}) $ of the Lie algebra $\LL_0^{k}: = (L[\fmh] \rtimes \mathrm{Vir})( \frac{A}{{\M_1}^{k}})$.  Again $\LL_0^{k-1}$ is an abelian subalgebra of
$\LL_0^{k}$. Here we are again as in the previous case as $(L[\fmh] \rtimes \mathrm{Vir} )( (\frac{{\M_1}^{k-1}}{{\M_1}^{k}})^2) M = 0. $ Hence we have $\LL_0^{k-1} M = 0$ and hence $G(\M_1) M = 0$ by induction.  By similar argument as we get $L[\overset{\circ}{\mathfrak{n}}_{-}] (\M_1)(M) =0$, hence $\mathfrak{L}(\M_1)M = 0$. \end{proof}
\begin{ppsn} \label{p6}
$\LL(\M_1)V = 0$.
\end{ppsn}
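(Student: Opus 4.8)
The plan is to bootstrap from the fact, already available at this point, that $\LL_0(\M_1)M = 0$ (Proposition \ref{p5}), up to the assertion that the entire ideal $\LL(\M_1)$ annihilates all of $V$. The argument splits into two stages: first upgrade $\LL_0(\M_1)M = 0$ to $\LL(\M_1)M = 0$ on the top space $M$, and then propagate this from $M$ to $V$ using that $\LL(\M_1)$ is an ideal of $\LL(A)$ together with the irreducibility of $V$.

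For the first stage I would decompose $\LL(\M_1) = L[\np](\M_1) \oplus \LL_0(\M_1) \oplus L[\nm](\M_1)$ according to the triangular decomposition. Here $L[\np](\M_1)M = 0$ since $M$ is the highest weight space, and $\LL_0(\M_1)M = 0$ by Proposition \ref{p5}, so the only term needing attention is $L[\nm](\M_1)M$. For this I would rerun the induction-on-height argument of Proposition \ref{p4} verbatim, with the cofinite ideal $I$ there replaced by $\M_1$ and $M_\eta$ replaced by $M$: one shows that for $\beta \in \fpr$ and $m \in \ZZ$ the vectors $(x_{-\beta} \otimes t^m)(\M_1)M$ are highest weight vectors of $V$, i.e. are killed by $L[\np](A)$. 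After commuting a positive loop root vector $(x_\gamma \otimes t^n)(a)$, $\gamma \in \fpr$, past $(x_{-\beta}\otimes t^m)(\M_1)$, the resulting terms are of three types: a term in which $(x_\gamma \otimes t^n)(a)$ hits $M$ (zero, as $M$ is a highest weight space), a central term in $K(\M_1)$ (zero by Proposition \ref{p1}), and a Cartan or lower-height term lying in $\LL_0(\M_1)M$ or in $(x_{-\beta'}\otimes t^{m'})(\M_1)M$ for $\beta'$ of strictly smaller height (zero by Proposition \ref{p5} and the induction hypothesis, respectively). Thus each $(x_{-\beta}\otimes t^m)(\M_1)M$ consists of highest weight vectors of $\fmh$-weight $\bar\lambda - \beta$, which is strictly below $\bar\lambda$ in the $\overset{\circ}{Q}_{+}$-order; such a vector would generate a proper nonzero submodule not meeting $M$, contradicting irreducibility unless it vanishes. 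Hence $L[\nm](\M_1)M = 0$ and therefore $\LL(\M_1)M = 0$.

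For the second stage, since $\M_1$ is an ideal of $A$ we have $[\LL(A), \LL(\M_1)] = [\LL,\LL]\otimes A\M_1 \subseteq \LL \otimes \M_1 = \LL(\M_1)$, so $\LL(\M_1)$ is an ideal of $\LL(A)$. Setting $\mathcal{W} := \{v \in V : \LL(\M_1)v = 0\}$, for $v \in \mathcal{W}$, $x \in \LL(A)$ and $y \in \LL(\M_1)$ one computes $y(xv) = x(yv) + [y,x]v = 0$, using $yv = 0$ and $[y,x] \in \LL(\M_1)$; hence $\mathcal{W}$ is an $\LL(A)$-submodule of $V$. By the first stage $M \subseteq \mathcal{W}$ and $M \neq 0$, so the irreducibility of $V$ forces $\mathcal{W} = V$, that is $\LL(\M_1)V = 0$.

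The main obstacle is the first stage, specifically the verification that $L[\nm](\M_1)M$ consists of highest weight vectors; everything hinges on $\LL_0(\M_1)M = 0$ feeding the Cartan and imaginary-root terms and on $K(\M_1)$ acting trivially (Proposition \ref{p1}), after which the height induction closes exactly as in Proposition \ref{p4}. Once $\LL(\M_1)M = 0$ is secured, the propagation to $V$ is the routine observation that the annihilator of an ideal is a submodule, and presents no difficulty.
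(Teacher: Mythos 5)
Your proposal is correct and follows essentially the same route as the paper: the paper's (very terse) proof of Proposition \ref{p6} just points back to the claim inside Proposition \ref{p5}, which is exactly your first stage --- rerun the height induction of Proposition \ref{p4} with $\M_1$ in place of $I$, using $\LL_0(\M_1)M = 0$ and $K(A)V=0$ to kill the Cartan and central terms, concluding that $L[\nm](\M_1)M$ consists of highest weight vectors of the wrong $\fmh$-weight and hence vanishes. Your second stage differs only cosmetically: you invoke the annihilator-of-an-ideal-is-a-submodule argument (as the paper itself does in Proposition \ref{mp1}), whereas the claim in Proposition \ref{p5} propagates $\LL(\M_1)M=0$ to $V = U(L[\nm](A))M$ by an induction on the length of monomials in $U_0(L[\nm](A))$; the two are equivalent.
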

\begin{proof}
Proof follows in the similar lines of the claim of above Proposition.

\end{proof}
Now we are in a position to prove Theorem \ref{thm}. By Proposition \ref{p6} it follows that $V$ is an irreducible module for $\gg' \rtimes \mathrm{Vir} = \fma \otimes \CC[t,t^{-1}] \otimes \CC K \rtimes \mathrm{Vir} $. Now we in similar state of 
Section 4 \cite{RJ}, and appealing to Theorem 4.3 of \cite{RJ}, completes our proof of Theorem \ref{thm}.
\section{Affine Central Operators} \label{Sec4}
In this section we recall results from Section 3 of \cite{R1} on affine central operators. All the results of Section 3 of \cite{R1} will go through in our setting except last Proposition 3.8 where Virasoro center makes appearance. Recall that
present paper deals with independent affine and Vir centers while in \cite{R1}, they had taken to be  same. We recall some notations:
\begin{dfn}

A module $V$ of $\LL(A)$ is said to be of category $\mathcal{O}_c$ if the following holds:
\begin{enumerate}
\item Virasoro center $C$ acts by a fixed scalar $c$ on $V$.
\item $V$ is weight module for $\LL(A)$ with respect to Cartan subalgebra $\bar{\hh}$ and has finite dimensional weight spaces.
\item For any $v \in V$ and $a \in A$ we have $X_{\alpha}(a)v = 0$ for $\mathrm{ht} \alpha > > 0, \alpha \in \Delta_{+}$ and $X_{\alpha} \in \LL_{\alpha}$.
\end{enumerate}
\end{dfn}
\noindent
{\bf{Note:}} Since we are assuming that $C$ acts by $c$, we will work with the Cartan subalgebra $\hh$ of $\gg$, i.e., $\fmh \oplus \CC K \oplus \CC d_0$.

We recall some well known facts from \cite{KAC}. Let $\alpha_0 = - \beta + \delta$, where $\beta$ is a highest root of $\fma$ and $\delta$ is the standard null root of $\fma$.
Let $\rho \in \hh^*$ such that $\langle \rho, \alpha_i \rangle = \frac{1}{2} \langle \alpha_i, \alpha_i \rangle$ for all $\alpha_i, 0 \leq i \leq n$. Let $\bar{\rho} = \rho|_{\fmh}$. Recall that
$\delta(\fmh) = 0, \delta(K) = 1, \delta(d_0) = 1$. Then $\rho = \bar{\rho} + h^{\vee}\Lambda_0$ (see \cite{KAC} 6.2.8) where $h^{\vee}$ is the dual Coxeter number of $\fma$. 
Let $\Delta = \{ \alpha + n \delta, m\delta: 0 \neq m \in \ZZ, n \in \ZZ, \alpha \in \overset{\circ}{\Delta}\}$. Let $ \{ h_i : 1 \leq i \leq n \} $ be a standard basis of $\fmh$. Let $ \{ h^i : 1 \leq i \leq n \}$ be the dual basis of $\{ h_i \}$, i.e.,
$\langle h_i , h^{j} \rangle = \delta_{i, j}$ for $1 \leq i,j \leq n $. Then $\{ h_i, K, d_0: 1 \leq i \leq n\}$ and $\{ h^i,  d_0, K: 1 \leq i \leq n\}$ are dual basis of $\hh$. Let $x_{\alpha} \in \fma_{\alpha}$ and $x_{- \alpha} \in \fma_{- \alpha}$ such that 
$\langle x_{\alpha}, x_{- \alpha} \rangle = 1$ so that $[x_{\alpha}, x_{- \alpha}] = \gamma^{-1}(\alpha)$ (see \cite{KAC} Theorem 2.2 (e)). Then the Casimir operator on $\gg$ is defined by 
$$\Omega = 2 \gamma^{-1}(\rho) + \sum{h_i h^i} + 2 K d_0 + 2 \sum_{\alpha \in \overset{\circ}{\Delta}} \sum_{n > 0}{x_{-\alpha} \otimes t^{-n} x_{ \alpha} \otimes t^n}$$ $$ + 2\sum_{n > 0} \sum_{i}{h_i \otimes t^{-n} h^i \otimes t^n} +
2 \sum_{\alpha \in \overset{\circ}{\Delta}_+}{x_{-\alpha} x_{ \alpha}}.$$
Define for $a, b \in A$
$$\Omega_{a, b}^{1} = \sum_{\alpha \in \overset{\circ}{\Delta}} \sum_{n > 0}{x_{-\alpha} \otimes t^{-n}(a) x_{ \alpha} \otimes t^n}(b),$$
$$\Omega_{a, b}^{2} =  \sum_{i} \sum_{n > 0}{h_i \otimes t^{-n}(a) h^i \otimes t^n}(b),$$
$$\Omega_{a, b}^{3} = \sum_{\alpha \in \overset{\circ}{\Delta}_+}{x_{-\alpha}(a) x_{\alpha}(b)} .$$
Then $$ \Omega(a, b) = 2 \gamma^{-1}(\rho)(ab) + \sum{h_i (a)h^i(b)} +  K(a) d_0(b) + K(b) d_0(a) $$ $$+ \Omega_{a, b}^{1} + \Omega_{b, a}^{1} + \Omega_{a, b}^{2} + \Omega_{b, a}^{2} + \Omega_{a, b}^{3} + \Omega_{b, a}^{3} .$$
\begin{ppsn} [Propsition 3.1 of \cite{R1}]
$[\Omega(a,b), \gg'] = 0$ on objects of $\mathcal{O}_c$.
\end{ppsn}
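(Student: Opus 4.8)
The plan is to verify $[\Omega(a,b), X] = 0$ on a generating set of $\gg'$, taking as generators $K$, the elements $h \otimes t^m$ with $h \in \fmh$, and the root vectors $x_\gamma \otimes t^m$ with $\gamma \in \overset{\circ}{\Delta}$ and $m \in \ZZ$. First I would confirm that $\Omega(a,b)$ is a well-defined operator on any object of $\mathcal{O}_c$: by condition (3) of the category, for a fixed $v$ and fixed $a$ the elements $x_\alpha \otimes t^n(a)$ annihilate $v$ once $n$ is large (the height of $\alpha + n\delta$ grows with $n$), so each of the infinite sums $\Omega^1_{a,b}$ and $\Omega^2_{a,b}$ collapses to a finite sum on $v$. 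Commutation with $K$ is immediate, since $K(A)$ is central in $\LL(A)$.

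Next I would carry out the core computation, bracketing $\Omega(a,b)$ against $X = x_\gamma \otimes t^m \in \gg'$ (the case $\gamma = 0$ giving $h \otimes t^m$). Applying the Leibniz rule $[uw, X] = [u,X]w + u[w,X]$ to each summand of $\Omega^1_{a,b}, \Omega^2_{a,b}, \Omega^3_{a,b}$ and to the finite pieces $\sum_i h_i(a)h^i(b)$, $K(a)d_0(b)+K(b)d_0(a)$ and $2\gamma^{-1}(\rho)(ab)$, I expand everything. Each elementary bracket $[(x_{-\alpha} \otimes t^{-n})(a),\, x_\gamma \otimes t^m]$ yields a structure-constant term $([x_{-\alpha},x_\gamma]\otimes t^{m-n})(a)$ together with the central term $m\,\delta_{m,n}\langle x_{-\alpha},x_\gamma\rangle K(a)$ coming from the affine cocycle; the element $a$ (resp. $b$) is merely carried along in its slot, so the whole bookkeeping reduces to the $\gg$-part of the ordinary affine Casimir.

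The cancellation then proceeds in two stages. The structure-constant terms are paired across $\Omega^1, \Omega^2, \Omega^3$ and against the $\fmh$-sum using invariance of the form: the identity $\langle [u,X], w\rangle + \langle u, [w,X]\rangle = 0$ forces the non-central contributions to telescope once the summation index $n$ is shifted by $m$. The boundary summands produced by this shift — those in which $-n+m$ crosses $0$ — are precisely absorbed by the degree-zero pieces $\Omega^3_{a,b}$, $\sum_i h_i(a)h^i(b)$ and $K(a)d_0(b)+K(b)d_0(a)$. The residual central terms, obtained by summing $m\,\delta_{m,n}\langle x_{-\alpha},x_\gamma\rangle$ over all $\alpha$ and $n>0$, collapse via the dual Coxeter number identity and are cancelled against the bracket of $X$ with the linear correction $2\gamma^{-1}(\rho)(ab)$; this is exactly where $\rho = \bar{\rho} + h^\vee \Lambda_0$ enters.

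The \textbf{main obstacle} I anticipate is the bookkeeping of the index shift $n \mapsto n - m$ in the infinite sums, together with the correct accounting of the boundary and central contributions. Unlike the finite-dimensional Casimir, the two factors of each summand sit in opposite graded pieces, so the shift moves terms between the tail $n>0$, the degree-zero block, and the opposite tail; one must argue carefully that the displaced terms reassemble into $\gamma^{-1}(\rho)$ and the $Kd_0$ correction rather than leaving a leftover. Once this is organized, the commutativity of $A$ guarantees that the computation is formally identical to Proposition 3.1 of \cite{R1}, with $a$ and $b$ distributed passively across the two slots.
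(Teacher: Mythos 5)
Your computation is the right one, but note that it is not the route this paper takes: the paper gives no proof at all --- the proposition is imported verbatim as Proposition 3.1 of \cite{R1}, Section \ref{Sec4} being explicitly a recollection of Section 3 of that paper, with only Proposition 3.8 re-proved because that is the only statement where the (now independent) Virasoro center $C$ enters. What you outline is, in substance, the proof living in \cite{R1}: well-definedness of the infinite sums from condition (3) of $\mathcal{O}_c$, triviality of the bracket with $K$, and the affine-Casimir cancellation with coefficients in $A$. Your self-contained argument has the virtue of making visible why the citation is legitimate in the changed setting: every bracket of a constituent of $\Omega(a,b)$ with an element of $\gg'$ stays inside $\gg'(A)$, so $C$ never appears and the independence of the two centers is irrelevant for this proposition. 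One caveat on your sketch: the claim that $a$ and $b$ are ``merely carried along,'' so that the bookkeeping ``reduces to the $\gg$-part of the ordinary affine Casimir,'' is too strong if read termwise. After the shift $n\mapsto n-m$, the surviving boundary terms of $[\Omega^{1}_{a,b},x_\gamma\otimes t^{m}]$ carry the decoration pattern $(a,b)$, and reordering their factors produces both the swapped pattern $(b,a)$ and normal-ordering terms decorated by $ab$; these do not cancel within $\Omega^{1}_{a,b},\Omega^{2}_{a,b},\Omega^{3}_{a,b}$ alone, but only against the partner sums $\Omega^{i}_{b,a}$, against $K(a)d_0(b)+K(b)d_0(a)$, and against $2\gamma^{-1}(\rho)(ab)$ (here commutativity of $A$ is genuinely used, even when bracketing with $X\otimes 1$). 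So the symmetrization in $a,b$ built into the definition of $\Omega(a,b)$ is essential rather than cosmetic; this is exactly the bookkeeping you identified as the main obstacle, and carrying it out carefully is the actual content of the proof in \cite{R1}.
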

\begin{dfn}
An operator $Z$ on objects of $\mathcal{O}_c$ is called an affine central operator if $Z$ commutes with an action of $\gg'$.
\end{dfn}
For example $\Omega(a,b)$ is an affine cental operator.
\subsection{}
Let for $j \neq 0$, $T_{j}(a,b) = - \frac{1}{j}[d_j, \Omega(a,b)]$. It is easy checking that $T_j (a, b)$ is an affine central
operator. All the result of Section 3 of \cite{R1} will go through. In particular $T_j (a, b)$ can be written down explicitly for the
purpose if application. We now rewrite Proposition 3.8 of \cite{R1} where Virasoro center will appear. Recall that $C$ denotes Virasoro center and $C(a): = C \otimes a$ for $a \in A$.
\begin{ppsn}
For $ 0 \neq j \in \ZZ, k \in \ZZ$ $$[d_k, T_j (a, b) ] = (j - k)T_j (a,b) - \delta_{j +k, 0} \frac{k^3 - k}{6} \mathrm{dim}\, \fma \,\, K(ab) $$ $$+ \delta_{j+k, 0} \frac{k^3 - k}{12}\big(K(a) C(b) + K(b)C(a) + 2 h^{\vee} C(ab) \big).$$
\end{ppsn}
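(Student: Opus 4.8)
The plan is to work directly from the definition $T_j(a,b) = -\tfrac{1}{j}[d_j,\Omega(a,b)]$, so that $[d_k,T_j(a,b)] = -\tfrac{1}{j}[d_k,[d_j,\Omega(a,b)]]$, and to expand the inner bracket by the Jacobi identity
\[
[d_k,[d_j,\Omega(a,b)]] = [[d_k,d_j],\Omega(a,b)] + [d_j,[d_k,\Omega(a,b)]].
\]
In the first summand I substitute the Virasoro relation $[d_k,d_j] = (j-k)d_{j+k} + \delta_{j+k,0}\tfrac{k^3-k}{12}C$; the central piece drops because $C = C\otimes 1$ is central in $\LL(A)$, so $[C,\Omega(a,b)]=0$, leaving $(j-k)[d_{j+k},\Omega(a,b)]$. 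For $j+k\neq 0$ the whole computation is formally identical to Proposition 3.8 of \cite{R1} and returns the Virasoro-covariance leading term $(j-k)T_{j+k}(a,b)$ with no central correction, consistent with $\delta_{j+k,0}=0$; since the non-central part never sees the Virasoro centre, the argument of \cite{R1} carries over unchanged.

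The genuinely new content is the central correction, which by the $\delta_{j+k,0}$ only appears when $k=-j$. Here the Jacobi identity alone is useless: substituting the two symmetric relations into one another yields only the tautology $0=0$ (or the antisymmetry $[d_{-j},T_j]=-[d_j,T_{-j}]$) and never isolates a central term. So instead I would insert the explicit form of $T_j(a,b)$ inherited from \cite{R1} and compute $[d_{-j},T_j(a,b)]$ term by term, retaining only the summands valued in $K(\cdot)$ and $C(\cdot)$, using $[d_{-j},d_j]=2jd_0-\tfrac{j^3-j}{12}C$. Two of the three central terms come out cleanly this way. First, the term $2\gamma^{-1}(\rho)(ab)$ of $\Omega(a,b)$: since $\rho=\bar\rho+h^\vee\Lambda_0$ and $\gamma^{-1}(\Lambda_0)=d_0$, it carries a component $2h^\vee d_0(ab)$, which contributes $2h^\vee d_j(ab)$ to $T_j(a,b)$; applying $[d_{-j},\cdot]$ produces exactly $\delta_{j+k,0}\tfrac{k^3-k}{12}\cdot 2h^\vee C(ab)$. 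Second, the term $K(a)d_0(b)+K(b)d_0(a)$ contributes $K(a)d_j(b)+K(b)d_j(a)$ to $T_j(a,b)$, and the same Virasoro central extension produces $\delta_{j+k,0}\tfrac{k^3-k}{12}\big(K(a)C(b)+K(b)C(a)\big)$.

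The third and hardest source is the affine central term $-\delta_{j+k,0}\tfrac{k^3-k}{6}\,\mathrm{dim}\,\fma\,K(ab)$, which arises from the off-diagonal parts $\Omega_{a,b}^{1}+\Omega_{b,a}^{1}$ and the Heisenberg parts $\Omega_{a,b}^{2}+\Omega_{b,a}^{2}$. Acting with $[d_{-j},\cdot]$ on their images inside $T_j(a,b)$ and collecting the affine central contributions from the internal brackets $[x_{-\alpha}\otimes t^{p},x_{\alpha}\otimes t^{q}]=\delta_{p+q,0}\,p\,K$ and $[h_i\otimes t^{p},h^i\otimes t^{q}]=\delta_{p+q,0}\,p\,K$ gives an infinite mode sum over $n>0$; the cubic dependence $k^3-k$ emerges from this regularized/reordered sum exactly as in the classical Sugawara central-charge computation, while summing over $\alpha\in\overset{\circ}{\Delta}$ and over the Cartan directions produces the count $|\overset{\circ}{\Delta}|+\mathrm{rank}\,\fma=\mathrm{dim}\,\fma$. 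I expect this mode-sum bookkeeping --- tracking the reorderings and which pairs of modes survive to give a nonzero $K$ --- to be the main obstacle, whereas the two Virasoro-centre terms are routine once the explicit $T_j(a,b)$ is in hand. Finally I would collect the three contributions and compare coefficients (noting that the factor $-\tfrac1j$ is already absorbed into $T_j$) to obtain the stated identity.
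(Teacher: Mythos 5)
Your proposal is correct and is essentially the paper's own argument: the paper offers no independent computation but simply carries over the explicit formula for $T_j(a,b)$ and the term-by-term calculation of Proposition 3.8 of \cite{R1} while distinguishing the affine centre $K$ from the Virasoro centre $C$, and the three sources of central corrections you isolate --- the $2h^{\vee}d_j(ab)$ piece coming from $2\gamma^{-1}(\rho)(ab)$ via $\rho = \bar{\rho}+h^{\vee}\Lambda_0$, the $K(a)d_j(b)+K(b)d_j(a)$ piece, and the Sugawara-type reordering of the mode sums $\Omega^{1}, \Omega^{2}$ giving $\frac{k^3-k}{6}\,\mathrm{dim}\,\fma\;K(ab)$ --- are exactly where those terms arise in that computation, with your observation that the $j+k\neq 0$ case is insensitive to whether the two centres are identified being the precise reason the rest of \cite{R1} ``goes through unchanged.'' One further point in your favour: your leading term $(j-k)T_{j+k}(a,b)$ is the degree-consistent one (since $[d_0,T_j(a,b)]=jT_j(a,b)$, the bracket $[d_k,T_j(a,b)]$ must have degree $j+k$), so the $(j-k)T_{j}(a,b)$ printed in the statement should be read as a typo for $(j-k)T_{j+k}(a,b)$.
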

\begin{rmk}
Suppose $V(\lambda)$ is an irreducible highest weight module for affine Lie algebra $\gg$ at non-critical level $(K + h^{\vee} \neq 0)$. Then it is well known that Virasoro algebra act
on $V(\lambda)$ by famous Sugawara operators (See \cite{KAC}, Chapter 12). In the present setup of this paper Affine and Virasoro centers differ. In this case (we take $A = \CC$) our 
affine central operators $T_j (1,1)$ will collapse to zero. But where the algebra $A$ is non-trivial we get some interesting affine central operators as explained in \cite{R1}.
\end{rmk}

\bibliography{loopaffine.bib}

\nocite*{}

\bibliographystyle{plain}

School of mathematics, Tata Institute of Fundamental Research,
Homi Bhabha Road, Mumbai 400005, India.\\
email: sena98672@gmail.com, senapati@math.tifr.res.in

Department of mathematics and statistics, IIT Kanpur,
Kalyanpur, Kanpur, 208016, India.\\
sachinsh@iitk.ac.in

Department of mathematics and statistics, IIT Kanpur,
Kalyanpur, Kanpur, 208016, India.\\
sudiptaa@iitk.ac.in

\end{document}